\documentclass[review]{elsarticle}

\usepackage{amssymb}
\usepackage{amsfonts}
\usepackage{amsmath}
\usepackage{mathtools}
\usepackage{amsthm}

\parskip=0.5em

\usepackage[letterpaper,margin=1in]{geometry}

\usepackage{enumerate}

      \theoremstyle{plain}
      \newtheorem{theorem}{Theorem}

      \newtheorem{proposition}[theorem]{Proposition}
      
      \newtheorem{question}[theorem]{Question}

      \theoremstyle{definition}
      \newtheorem{definition}[theorem]{Definition}
      \newtheorem{notation}[theorem]{Notation}

      \theoremstyle{remark}

      \theoremstyle{plain}
      \newtheorem*{theorem*}{Theorem}
      \newtheorem*{lemma*}{Lemma}
      \newtheorem*{corollary*}{Corollary}
      \newtheorem*{proposition*}{Proposition}
      \newtheorem*{conjecture*}{Conjecture}
      \newtheorem*{question*}{Question}
      \newtheorem*{claim*}{Claim}

      \theoremstyle{definition}
      \newtheorem*{definition*}{Definition}
      \newtheorem*{example*}{Example}
      \newtheorem*{observation*}{Observation}
      \newtheorem*{game*}{Game}

      \theoremstyle{remark}
      \newtheorem*{remark*}{Remark}

\usepackage{clontzDefinitions}

\begin{document}

\title{Limited Information Strategies and Discrete Selectivity}
\author[SC]{Steven Clontz}
\ead{sclontz@southalabama.edu}
\author[JH]{Jared Holshouser}
\ead{JaredHolshouser@southalabama.edu}
\address[SC]{Department of Mathematics and Statistics,
The University of South Alabama,
Mobile, AL 36688}
\address[JH]{Department of Mathematics and Statistics,
The University of South Alabama,
Mobile, AL 36688}

\begin{keyword}
Selection property \sep selection game \sep point picking game \sep limited information strategies \sep covering properties \sep Cp theory
\end{keyword}

\begin{abstract}
 We relate the property of discrete selectivity and its corresponding game, both recently introduced by V.V. Tkachuck, to a variety of selection principles and point picking games. In particular we show that player II can win the discrete selection game on \(C_p(X)\) if and only if player II can win a variant of the point open game on \(X\). We also show that the existence of limited information strategies in the discrete selection game on \(C_p(X)\) for either player are equivalent to other well-known topological properties.
\end{abstract}

\maketitle

\renewcommand{\rothGame}[1]{\ensuremath{G_1(\mc O_{#1},\mc O_{#1})}}

\section{Introduction}

In the course of studying the strong domination of function spaces by second countable spaces and countable spaces, G. Sanchez and Tkachuk isolated the topological property of discrete selectivity\cite{SanchezTkachuk}\cite{Tkachuk1}.
A space is discretely selective if for every sequence \(\{U_n : n \in \omega\}\) of non-empty open subsets of the space, there are points \(x_n \in U_n\) so that \(\{x_n : n \in \omega\}\) is closed discrete.
In subsequent work, Tkachuk showed that for \(T_{3.5}\)-spaces, \(C_p(X)\) is discretely selective if and only if \(X\) is uncountable.

Discrete selectivity naturally generates a game, in which player I plays open sets, player II responds with points from those open sets, and player II wins if the points form a closed discrete set.
Tkachuk explored what happens when player I has a winning strategy for this game, showing that the existence of a winning strategy for player I in this game on \(C_p(X)\) is equivalent to player I having a winning strategy for Gruenhage's \(W\)-game on \(C_p(X,\mathbf 0)\) and is also equivalent to player I having a winning strategy for the point-open game on \(X\)\cite{Tkachuk3}.
Tkachuk also showed that if player II has a winning strategy in the point-open game on \(X\), then player II has a winning strategy in the discrete selection game on \(C_p(X)\).
Tkachuk hypothesized that the implication partially reverses for player II
(considering \(\omega\)-covers), and posed this problem as an open question.
All of the strategies Tkachuk worked with were perfect information strategies.

By considering limited information strategies and other topological games, we were able to answer Tkachuk's question and uncover a number of interesting connections between the discrete selection game and other topological properties.
Classic works by Telgarksy and Galvin show that the point open game is dual to the Rothberger game\cite{Galvin}.
Clontz, in work prior to this, established the equivalence of the existence of winning strategies for the Rothberger game and variants of the Rothberger game on \(X\) to the existence of winning strategies in games related to countable fan tightness for \(C_p(X)\)\cite{Clontz1}.
Clontz did this both for strategies of perfect information and for limited information strategies.
Starting with these results, we were able to relate a host of games on \(C_p(X)\) and \(X\) for strategies of both limited information and perfect information.
As a result we answer Tkachuk's question: player II has a winning strategy for the discrete selection game on \(C_p(X)\) if and only if player II has a winning strategy for the \(\omega\)-cover variant of the finite-open game on \(X\).
The \(\omega\)-cover variant of the finite-open game is closely related to the point open game, but it is consistent that they are different.
Tkachuk referred to a strategy for this variant for player II as an almost winning strategy.
So in Tkachuk's terminology, player II has a winning strategy for the discrete selection game on \(C_p(X)\) if and only if player II has an almost winning strategy for the point-open game on \(X\).
Moreover, we answered the implied question ``what topological property does a winning strategy for player II for the discrete selection game on \(C_p(X)\) correspond to?''
We show that player II has a winning strategy for the discrete selection game on \(C_p(X)\) if and only if \(X\) is not Rothberger with respect to \(\omega\)-covers.
This in turn is true if and only if some finite power of \(X\) is not Rothberger.

\section{Definitions}

We will be using a number of definitions.
These are broken up into three main categories: labeling schema, topological notions, and games.
\(\omega=\{0,1,2,\dots\}\) refers to the natural numbers, 
\(A^{<\omega}\) collects all the finite tuples with entries from \(A\),
and \([A]^{<\omega}\) collects all the finite subests of \(A\).

\subsection{Labeling Schema}

\begin{definition}
 The \term{selection principle} \(\schSelProp{\mc A}{\mc B}\) states that given \(A_n\in\mc A\) for \(n<\omega\), there exist \(B_n\in[A_n]^{<\omega}\) such that \(\bigcup_{n<\omega}B_n\in\mc B\).
\end{definition}

\begin{definition}
 An \term{\(\omega\)-length game} \(G=\<M,W\>\) is played by two players \(\plI\) and \(\plII\).
 Each round, the players alternate choosing moves \(a_n\) and \(b_n\) from the moveset \(M\).
 If the seqeunce \(\<a_0,b_0,a_1,b_1,\dots\>\) belongs to the payoff set \(W\), then \(\plI\)
 is the winner; otherwise \(\plII\) is the winner.
 
 A \term{strategy} is a function \(\sigma:M^{<\omega}\to M\) which is used to decide the
 move for a particular player. For \(\plI\), \(\sigma(\emptyset)\) is the first move,
 and if \(\plII\) responds with \(b_0\), then \(\sigma(\<b_0\>)\) yields \(\plI\)'s next move,
 and so on. Likewise, the first two moves for \(\plII\) using a strategy \(\sigma\)
 would be \(\sigma(\<a_0\>)\) and \(\sigma(\<a_0,a_1\>\) in response to \(\plI\)'s moves
 \(a_0\) and \(a_1\).
 
 A strategy is said to be a \term{winning strategy} for a player if it always 
 guarantees a victory for that player, regardless of the moves chosen by the opponent in response.
 If \(\plI\) has a winning strategy for \(G\), we write \(\plI\win G\); likewise we write
 \(\plII\win G\) if \(\plII\) has a winning strategy for \(G\).
 Of course, both players cannot have winning strategies for the same game (although there
 do exist \term{indetermined} games for which \(\plI\notwin G\) and \(\plII\notwin G\)).
\end{definition}

\begin{definition}
 The \term{selection game} \(\schSelGame{\mc A}{\mc B}\) is the analogous game to 
 \(\schSelProp{\mc A}{\mc B}\), where during each round \(n<\omega\), Player \(\plI\) 
 first chooses \(A_n\in\mc A\), and then Player \(\plII\) chooses \(B_n\in[A_n]^{<\omega}\).
 Player \(\plII\) wins in the case that \(\bigcup_{n<\omega}B_n\in\mc B\), 
 and Player \(\plI\) wins otherwise.

 A strategy for \(\plII\) in the game \(\schSelGame{\mc A}{\mc B}\) is then a function 
 \(\sigma\) satisfying \(\sigma(\<A_0,\dots,A_n\>)\in[A_n]^{<\omega}\) for 
 \(\<A_0\,\dots,A_n\>\in\mc A^{n+1}\), and is winning if whenever \(\plI\) plays 
 \(A_n\in\mc A\) during each round \(n<\omega\), \(\plII\) wins the game by 
 playing \(\sigma(\<A_0,\dots,A_n\>)\) during each round \(n<\omega\).
\end{definition}

\begin{definition}
 In addition to strategies which have access to all the previous moves of the game (also known as perfect information), we will consider the existence of strategies which use less information.
 A \term{Markov strategy} is a strategy which tells the player what to play given only the most recent move of the opponent and the current round number.
 For \(\plI\), it is a function \(\sigma(Y,n)\), where \(Y\) is a possible play from \(\plII\) and \(n \in \omega\).
 If \(n = 0\), \(Y\) is taken to be \(\emptyset\).
 If \(\plI\) has a winning Markov strategy, we write \(\plI\markwin G\).
 For \(\plII\) it is a function \(\sigma(X,n)\), where \(X\) is a possible play from \(\plI\) and \(n \in \omega\).
 If \(\plII\) has a winning Markov strategy, we write \(\plII\markwin G\).

 More specifically, A \term{Markov strategy} for \(\plII\) in the game \(\schSelGame{\mc A}{\mc B}\) is a function \(\sigma\) satisfying \(\sigma(A,n)\in[A_n]^{<\omega}\) for \(A\in\mc A\) and \(n<\omega\). We say this Markov strategy is \term{winning} if whenever \(\plI\) plays \(A_n\in\mc A\) during each round \(n<\omega\), \(\plII\) wins the game by playing \(\sigma(A_n,n)\) during each round \(n<\omega\).
  
 A \term{tactic} is a strategy which only depends on the most recent play of the opponent.
 If \(\plI\) has a winning tactic, we write \(\plI\tactwin G\) and if \(\plII\) has a winning tactic, we write \(\plII\tactwin G\).
 In some instances, player I will be able to win a game regardless of what II is playing.
 In this case, it is possible to have a strategy for I which depends only on the round of the game.
 We say I has a \term{predetermined strategy} and write \(\plI\prewin G\). 
\end{definition}

\begin{notation}
 If \(\schSelProp{\mc A}{\mc B}\) characterizes the property \(P\), then we say \(\plII\win\schSelGame{\mc A}{\mc B}\) characterizes \(P^+\) (``strategically \(P\)''), and \(\plII\markwin\schSelGame{\mc A}{\mc B}\) characterizes \(P^{\plusMark}\) (``Markov \(P\)'').
 Of course, \(P^{\plusMark}\Rightarrow P^+ \Rightarrow P\).
\end{notation}

\begin{definition}
 Let \(\schStrongSelProp{\mc A}{\mc B},\schStrongSelGame{\mc A}{\mc B}\) be the natural variants of \(\schSelProp{\mc A}{\mc B},\schSelGame{\mc A}{\mc B}\) where each choice by \(\plII\) must either be a single element or singleton (whichever is more convenient for the proof at hand), rather than a finite set.
 Convention calls for denoting these as \term{strong} versions of the corresponding selection principles and games, denoted here as \(sP\) for property \(P\), with a few exceptions for properties which already have their own names.
\end{definition}

\begin{definition}
 We will use the following shorthand for various special collections of subsets of \(X\).
  \begin{itemize}
   \item Let \(\mc O_X\) be the collection of open covers for a topological space   \(X\).
   \item An \term{\(\omega\)-cover} \(\mc U\)   for a topological space \(X\) is an open cover   such that for every \(F\in[X]^{<\omega}\), there exists some \(U\in\mc U\)   such that \(F\subseteq U\). Let \(\Omega_X\) be the collection of \(\omega\)-covers for a topological   space \(X\).
   \item Let \(\Omega_{X,x}\) be the collection of subsets \(A\subset X\) where   \(x\in \overline{A}\). (Call \(A\) a \term{blade} of \(x\).)
   \item Let \(\mc D_{X}\) be the collection of dense subsets of a topological   space \(X\).
   \item Let \(T_X\) to be the non-empty open subsets of \(X\).
   \item Let \(T_{X,x} = \{U \in T_X : x \in U\}\).
  \end{itemize}
\end{definition}

\subsection{Topological Notions}

\begin{definition}\label{SelectionPrinciples}
 Using the notation just established, we can record a number of topological properties.
  \begin{itemize}
   \item \(\schSelProp{\mc O_X}{\mc O_X}\) is the well-known \term{Menger property} for \(X\) (\(M\) for short).
     \begin{itemize}
       \item \(\schStrongSelProp{\mc O_x}{\mc O_X}\) is the well-known \term{Rothberger property}
       (\(R\) for short), so we say this instead of strong Menger or \(sM\).
     \end{itemize}
   \item \(\schSelProp{\Omega_X}{\Omega_X}\) is the \term{\(\Omega\)-Menger property} for \(X\) (\(\Omega M\) for short).
     \begin{itemize}
       \item Likewise we call \(\schStrongSelProp{\Omega_X}{\Omega_X}\) the 
       \term{\(\Omega\)-Rothberger property} for \(X\) (\(\Omega R\) for short). 
     \end{itemize}
   \item \(\schSelProp{\Omega_{X,x}}{\Omega_{X,x}}\) is the \term{countable fan tightness property} for \(X\) at \(x\) (\(CFT_x\) for short). A space \(X\) has \term{countable fan tightness} (\(CFT\) for short)
    if it has countable fan tightness at each point \(x\in X\).
   \item \(\schSelProp{\mc D_X}{\Omega_{X,x}}\) is the \term{countable dense fan tightness property} for \(X\) at \(x\) (\(CDFT_x\) for short). A space \(X\) has \term{countable dense fan tightness} (\(CDFT\) for short) if it has countable dense fan tightness at each point \(x\in X\).
  \end{itemize}
\end{definition}

Note that for homogeneous spaces such as \(C_p(X)\), \(C(D)FT_x\) is equivalent to \(C(D)FT\).

Tkachuk isolated the following notion in \cite{Tkachuk2}.

\begin{definition}
 A space \(X\) is \term{discretely selective} if whenever \(\{U_n : n \in \omega\}\) is a sequence of open subsets of \(X\), there are points \(x_n \in U_n\) so that \(\{x_n : n \in \omega\}\) is closed discrete.
\end{definition}

We will use the following notation when working with \(C_p(X)\).

\begin{definition}
 Suppose \(X\) is \(T_{3.5}\).
 Basic open subsets of \(C_p(X)\) will be written as
 \[
  [f,F,\epsilon] = \{g\in C_p(X):|g(x)-f(x)|<\epsilon\text{ for all }x\in F\}
 \]
 where \(f \in C_p(X)\), \(F\) is a finite subset of \(X\), and \(\epsilon > 0\) is a real number.
 \(F\) is called the \term{support} of \([f,F,\epsilon]\).
 It follows that all open \(U \subseteq C_p(X)\) restrict only finitely many
 coordinates, which we label \(\supp(U)\).
\end{definition}

\subsection{Topological Games}

\begin{definition}
 Selection games associated with the principles listed in Definition \ref{SelectionPrinciples} 
 will be investigated in this paper; for example, \(\schStrongSelGame{\mc O_X}{\mc O_X}\) is the well-known 
 \term{Rothberger game}.
\end{definition}
  
\begin{definition}
 The following point-picking games will also be played in this paper.
  \begin{itemize}
   \item The \term{point-open game} for \(X\), denoted \(PO(X)\), is played as follows. Each round, player I plays a point \(x_n \in X\) and player II plays an open sets \(U_n\) with the property that \(x_n \in U_n\). I wins the play of the game if \(X = \bigcup_n U_n\).
\begin{itemize}
   \item The \term{finite-open game} for \(X\), denoted \(FO(x)\), is played similarly, except that I now plays finite subsets of \(X\), and II's open sets must cover I's corresponding finite sets.
   \item \(\Omega FO(X)\) and \(\Omega PO(X)\) are defined similarly, but \(\plI\) now wins if \(\{U_n : n \in \omega\}\) forms an \(\omega\)-cover of \(X\).
\end{itemize}
   \item Fix \(x \in X\). \term{Gruenhage's \(W\)-game} for \(x\), denoted \(\gruConGame{X}{x}\), is played as follows. Each round, player I plays an open set \(U_n\) with the property that \(x \in U_n\) and player II plays a point \(x_n \in U_n\). I wins if \(x_n \to x\).
\begin{itemize}
   \item \term{Gruenhage's clustering-game} for \(x\), denoted \(\gruClusGame{X}{x}\), is played the same as \(\gruConGame{X}{x}\), except that I wins if \(x\) is a cluster point of \(\{x_n : n \in \omega\}\).
\end{itemize}
   \item Fix \(x \in X\). The \term{closure game} for \(x\), denoted \(CL(X,x)\), is played as follows. Each round, player I plays an open set \(U_n\) and II plays a point \(x_n \in U_n\). I wins if \(x \in \overline{\{x_n : n \in \omega\}}\).
\begin{itemize}
   \item The \term{discrete selectivity game}, denoted \(CD(X)\), is played the same as \(CL(X,x)\), but now II wins if \(\{x_n : n \in \omega\}\) is closed and discrete.
 \end{itemize}
\end{itemize}
\end{definition}

It's worth noting that selection principles may be characterized
using limited information strategies for seleciton games.

\begin{proposition}
  \(\schStrongSelProp{\mathcal{A}}{\mathcal{B}}\) if and only if 
  \(\plI \notprewin\schStrongSelGame{\mathcal{A}}{\mathcal{B}}\).
\end{proposition}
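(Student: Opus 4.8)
The plan is to observe that both sides of the biconditional, once unwound, are \emph{literally the same combinatorial statement}, so the proof is pure definition-matching. The key observation is that a \term{predetermined} strategy for \(\plI\) depends only on the round number, hence is nothing more than a sequence \(\langle A_n : n<\omega\rangle\) with each \(A_n\in\mathcal A\); and such a strategy is winning for \(\plI\) precisely when \emph{no} choice of singletons \(b_n\in A_n\) by \(\plII\) produces a set \(\{b_n : n<\omega\}\in\mathcal B\). With this dictionary in hand, the two directions are immediate.

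For the forward direction I would assume \(\schStrongSelProp{\mathcal A}{\mathcal B}\) and show that \(\plI\) cannot have a winning predetermined strategy. Given any such strategy, read off its associated sequence \(\langle A_n\rangle\) of moves in \(\mathcal A\); the selection principle hands us singletons \(b_n\in A_n\) with \(\{b_n : n<\omega\}\in\mathcal B\), and these moves constitute a play of \(\plII\) that defeats the strategy. Since every predetermined strategy is beaten, \(\plI\notprewin\schStrongSelGame{\mathcal A}{\mathcal B}\).

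For the reverse direction I would assume \(\plI\notprewin\schStrongSelGame{\mathcal A}{\mathcal B}\) and verify the principle directly. Any sequence \(\langle A_n : n<\omega\rangle\) with \(A_n\in\mathcal A\) defines a predetermined strategy for \(\plI\); by hypothesis it is not winning, so there is a play \(\langle b_n\rangle\) of \(\plII\) with \(b_n\in A_n\) and \(\{b_n : n<\omega\}\in\mathcal B\). These are exactly the witnesses demanded by \(\schStrongSelProp{\mathcal A}{\mathcal B}\), so the principle holds.

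The argument carries no genuine obstacle; the only points requiring care are bookkeeping. First, one must note that a predetermined strategy and a bare sequence of \(\plI\)-moves are interchangeable, which is precisely the content of the definition of \(\prewin\). Second, one uses that ``\(\plI\)'s strategy is not winning'' is synonymous with ``\(\plII\) has a play that wins against it,'' which is valid because each completed run of the game has a winner (there are no draws). The one subtlety worth flagging explicitly is the convention that \(\plII\)'s moves in the \emph{strong} game are single elements (or singletons), so that the set of \(\plII\)'s plays is exactly \(\{b_n : n<\omega\}\), matching the target membership in \(\mathcal B\) demanded by the strong principle; with the finite-set version one would instead collect \(\bigcup_{n<\omega} B_n\), and the identical proof applies verbatim.
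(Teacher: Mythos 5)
Your proof is correct and matches the paper's argument: both directions rest on the same dictionary between predetermined strategies for \(\plI\) and sequences \(\langle A_n : n<\omega\rangle\) from \(\mathcal{A}\), with defeats of such strategies corresponding exactly to selections witnessing \(\schStrongSelProp{\mathcal{A}}{\mathcal{B}}\). The only cosmetic difference is that the paper proves the reverse direction in contrapositive form (a failure of the principle yields a winning predetermined strategy), whereas you argue it directly; this is a trivial logical rearrangement, not a different approach.
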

\begin{proof}
 First suppose that \(\schStrongSelProp{\mathcal{A}}{\mathcal{B}}\) holds.
 Let \(\sigma\) be a tentative predetermined strategy for \(\plI\) for \(\schStrongSelGame{\mathcal{A}}{\mathcal{B}}\).
 Then \(\{\sigma(n) : n \in \omega\} \subseteq \mathcal{A}\), and therefore there are \(B_n \in \sigma(n)\) for all \(n\) so that \(\bigcup_n B_n \in \mathcal{B}\).
 Thus \(\sigma\) is not a winning strategy for \(\plI\).
 So \(\plI \notprewin\schStrongSelGame{\mathcal{A}}{\mathcal{B}}\).
 
 Now suppose that \(\schStrongSelProp{\mathcal{A}}{\mathcal{B}}\) is false.
 Then there is some sequence \(\{A_n : n \in \omega\} \subseteq \mathcal{A}\) with the property that whenever \(B_n \in A_n\) for all \(n\), \(\bigcup_n B_n \notin \mathcal{B}\).
 Then the predetermined strategy \(\sigma(n) = A_n\) is winning for \(\plI\) for \(\schStrongSelGame{\mathcal{A}}{\mathcal{B}}\).
 Thus \(\plI \prewin\schStrongSelGame{\mathcal{A}}{\mathcal{B}}\).
\end{proof}

The proof of the following is similar.

\begin{proposition}
  \(\schSelProp{\mathcal{A}}{\mathcal{B}}\) if and only if 
  \(\plI \notprewin\schSelGame{\mathcal{A}}{\mathcal{B}}\).
\end{proposition}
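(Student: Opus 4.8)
The plan is to mirror the proof of the previous proposition almost verbatim, the only change being that player II now selects a finite subset \(B_n \in [A_n]^{<\omega}\) of each move rather than a single element. The crucial observation is that a predetermined strategy for \(\plI\) is nothing more than a fixed sequence \(\<\sigma(0), \sigma(1), \dots\>\) of elements of \(\mathcal{A}\), chosen in advance and independent of \(\plII\)'s responses. Hence the game \(\schSelGame{\mathcal{A}}{\mathcal{B}}\), when \(\plI\) is restricted to such strategies, is governed entirely by whether \(\plII\) can respond to an arbitrary fixed sequence from \(\mathcal{A}\) in the manner demanded by \(\mathcal{B}\), which is exactly the content of \(\schSelProp{\mathcal{A}}{\mathcal{B}}\).

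For the forward direction, I would assume \(\schSelProp{\mathcal{A}}{\mathcal{B}}\) and take an arbitrary tentative predetermined strategy \(\sigma\) for \(\plI\). Since \(\{\sigma(n) : n \in \omega\} \subseteq \mathcal{A}\), the selection principle supplies finite sets \(B_n \in [\sigma(n)]^{<\omega}\) with \(\bigcup_n B_n \in \mathcal{B}\); these moves constitute a winning play for \(\plII\), so \(\sigma\) fails to be winning. As \(\sigma\) was arbitrary, \(\plI \notprewin \schSelGame{\mathcal{A}}{\mathcal{B}}\).

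For the converse, I would argue the contrapositive. Assuming \(\schSelProp{\mathcal{A}}{\mathcal{B}}\) fails, there is a witnessing sequence \(\{A_n : n \in \omega\} \subseteq \mathcal{A}\) for which every selection \(B_n \in [A_n]^{<\omega}\) satisfies \(\bigcup_n B_n \notin \mathcal{B}\). Setting \(\sigma(n) = A_n\) then defines a predetermined strategy against which \(\plII\) can never win, so \(\plI \prewin \schSelGame{\mathcal{A}}{\mathcal{B}}\).

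I do not expect any genuine obstacle here; the argument is routine once predetermined strategies are identified with fixed sequences. The one point meriting care is the matching of quantifiers: the failure of the principle must be read as ``there exists a sequence in \(\mathcal{A}\) such that \emph{every} finite selection lands outside \(\mathcal{B}\),'' which is precisely the universal statement needed to certify that \(\plI\)'s predetermined strategy defeats all of \(\plII\)'s plays.
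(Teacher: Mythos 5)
Your proof is correct and is essentially the paper's own argument: the paper proves the strong version explicitly and then states that this proposition's proof is ``similar,'' which is exactly the adaptation you carried out (replacing single-element selections \(B_n \in A_n\) with finite selections \(B_n \in [A_n]^{<\omega}\), identifying predetermined strategies with fixed sequences from \(\mathcal{A}\), and running the contrapositive for the converse). No gaps; nothing further is needed.
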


\section{Strategies for Player I for the Discrete Selection Game on \(C_p(X)\)}

We begin by extending theorem 3.8 of Tkachuk\cite{Tkachuk3} to equate the existence of strategies for 11 games.

\begin{theorem}
 The following are equivalent for \(T_{3.5}\) spaces \(X\).
 \begin{enumerate}[a)]
  \item \(\plII\win \schStrongSelGame{\mc O_X}{\mc O_X}\), that is, \(X\) is \(R^+\).
  \item \(\plII\win \schStrongSelGame{\Omega_X}{\Omega_X}\), that is, \(X\) is \(\Omega R^+\).
  \item \(\plI\win PO(X)\). 
  \item \(\plI\win FO(X)\).
  \item \(\plI\win\Omega FO(x)\).
  \item \(\plI\win \gruConGame{C_p(X)}{\mathbf 0}\).
  \item \(\plI\win \gruClusGame{C_p(X)}{\mathbf 0}\).
  \item \(\plI\win CL(C_p(X),\mathbf 0)\).
  \item \(\plI\win CD(C_p(X))\).
  \item \(\plII\win \schStrongSelGame{\Omega_{C_p(X),\mathbf 0}}{\Omega_{C_p(X),\mathbf 0}}\),
        that is, \(C_p(X)\) is \(sCFT^+\).
  \item \(\plII\win \schStrongSelGame{\mc D_{C_p(X)}}{\Omega_{C_p(X),\mathbf 0}}\),
        that is, \(C_p(X)\) is \(sCDFT^+\).
 \end{enumerate}
\end{theorem}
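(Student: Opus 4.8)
The plan is to prove all eleven statements equivalent by building a network of implications around the three conditions $(c)$, $(f)$, and $(i)$, which Tkachuk's Theorem~3.8 already shows equivalent; the task is thus to splice in the remaining eight, namely the $\omega$-cover variants $(b)$ and $(e)$, the finite-open game $(d)$, the Rothberger game $(a)$, the two intermediate point-picking games $(g)$ and $(h)$, and the two fan-tightness selection games $(j)$ and $(k)$. I would organize these into a covering cluster $\{(a),(b),(c),(d),(e)\}$ played on $X$ and a $C_p(X)$ cluster $\{(f),(g),(h),(i),(j),(k)\}$ played at $\mathbf 0$, and then weld the two clusters together through the Galvin--Telgarsky duality and Clontz's earlier $C_p$-dualities.

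Several of the additions are soft. Since a sequence converging to $\mathbf 0$ has $\mathbf 0$ as a cluster point, and a cluster point lies in the closure, the \emph{identical} Player~I strategy witnesses $(f)\Rightarrow(g)\Rightarrow(h)$: the neighborhoods of $\mathbf 0$ played in $\gruConGame{C_p(X)}{\mathbf 0}$ and $\gruClusGame{C_p(X)}{\mathbf 0}$ are legal moves in $CL(C_p(X),\mathbf 0)$. To reach $(h)\Rightarrow(i)$ I would exploit the homogeneity of $C_p(X)$: translate a winning closure strategy to a point $g\ne\mathbf 0$ and feed Player~I the \emph{punctured} open sets $U_n\setminus\{g\}$, which are open since $C_p(X)$ is $T_1$. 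Player~II can then never select $g$, yet the strategy still drives $g\in\overline{\{f_n\}}$, so $g$ is a genuine accumulation point (using that $C_p(X)$ has no isolated points) and $\{f_n\}$ fails to be closed discrete. Together with Tkachuk's $(i)\Rightarrow(f)$ this brackets $(g)$ and $(h)$ between already-equivalent conditions. On the covering side, $(c)\Rightarrow(d)$ holds because a point is a singleton and the covering payoff is unchanged, while $(e)\Rightarrow(d)$ holds because every $\omega$-cover is a cover; Galvin--Telgarsky supplies $(a)\Leftrightarrow(c)$, and Clontz's variants of that duality supply both the $\omega$-cover duality $(b)\Leftrightarrow(e)$ and the bridges $(b)\Leftrightarrow(j)\Leftrightarrow(k)$ to the fan-tightness selection games, at each information level.

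The genuinely new and hardest step is to collapse the plain-cover world into the $\omega$-cover world for Player~I, that is, to prove the upgrade $(d)\Rightarrow(e)$ (equivalently that the strategic Rothberger and $\Omega$-Rothberger properties coincide here, $R^+\Leftrightarrow\Omega R^+$). This is where the obstacle lies: a priori forcing an $\omega$-cover is strictly stronger than forcing a cover, and for Player~II the analogous finite-open and point-open games are consistently distinct, so the whole point is that the presence of a \emph{strategy} erases the gap. My plan is to convert a winning finite-open strategy into one whose induced selection $\{U_n\}$ engulfs \emph{every} finite subset of $X$, not merely every point. The delicate part is that Player~I has only countably many probes $F_n\in[X]^{<\omega}$ available against the possibly uncountably many finite subsets to be engulfed, so one cannot simply enumerate $[X]^{<\omega}$; the strategy must leverage Player~II's forced covering responses to pin down an $\omega$-cover.

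The cleanest route, I expect, is to push the collapse through the selection game $(j)$, where the $\omega$-structure of $\Omega_{C_p(X),\mathbf 0}$ is native: matching the finite supports $\supp(U_n)$ of Player~I's basic open sets in $C_p(X)$ with the finite sets played on $X$, and verifying that a selection clustering at $\mathbf 0$ in $C_p(X)$ corresponds \emph{precisely} to an $\omega$-cover of $X$ rather than merely a cover. Carrying out this transfer in a payoff-preserving way is the step I anticipate being the main obstacle, and it is the key ingredient that extends Tkachuk's three-way equivalence to the full list and sets up the Player~II analysis in the sequel.
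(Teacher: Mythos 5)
Your soft implications are correct, and in one place nicer than the paper's own treatment: the puncturing argument for (h)\(\Rightarrow\)(i) (have \(\plI\) play \(U_n\setminus\{\mathbf 0\}\), which is open and non-empty since \(C_p(X)\) has no isolated points, so that \(\mathbf 0\in\overline{\{f_n:n<\omega\}}\setminus\{f_n:n<\omega\}\) and \(\plII\)'s selection is not closed) is a valid self-contained replacement for part of the appeal to Tkachuk's 3.8, and (f)\(\Rightarrow\)(g)\(\Rightarrow\)(h), (c)\(\Rightarrow\)(d), (e)\(\Rightarrow\)(d) and your citations for (a)\(\Leftrightarrow\)(c), (c)\(\Leftrightarrow\)(f)\(\Leftrightarrow\)(i), (b)\(\Leftrightarrow\)(j)\(\Leftrightarrow\)(k) are fine. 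But there is a genuine gap, and it is exactly the step you yourself flag as anticipated: the upgrade (d)\(\Rightarrow\)(e) (equivalently \(R^+\Rightarrow\Omega R^+\)) is never proved, only planned. This is fatal to the architecture, because in your implication graph that single arrow does double duty: it is the only arrow \emph{out of} (d) (your other arguments for (d) point into it), and it is the only arrow from the cluster \(\{(a),(c),(d),(f),(g),(h),(i)\}\) into the cluster \(\{(b),(e),(j),(k)\}\). Without it, five of the eleven items are never shown to follow from the others. The proposed detour "through (j)" does not avoid the difficulty either: (j) is attached to the covering side only via Clontz's equivalence with (b), i.e., via the \(\omega\)-cover side of the very gap being bridged, and the support correspondence between neighborhoods of \(\mathbf 0\) in \(C_p(X)\) and finite subsets of \(X\) natively produces \(\omega\)-covers, so transferring a plain-cover strategy across it meets the identical plain-versus-\(\omega\) problem it was meant to solve.

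The missing idea is preservation under finite powers. The paper proves (a)\(\Rightarrow\)(b) by citing Dias--Scheepers for the fact that \(X^m\) is \(R^+\) for every finite \(m\), observing that an \(\omega\)-cover \(\mathcal U\) of \(X\) induces the open cover \(\{U^m:U\in\mathcal U\}\) of \(X^m\), and then multiplexing winning Rothberger strategies \(\sigma_m\) on the powers \(X^m\) into a single strategy for \(\plII\) in the \(\Omega\)-Rothberger game via a pairing bijection \(\omega\to\omega^2\), each \(\sigma_m\) being consulted on infinitely many rounds; a finite set \(F=\{x_1,\dots,x_m\}\) is then engulfed by some selected \(U_n\) because the rounds devoted to \(\sigma_m\) constitute a complete run of the Rothberger game on \(X^m\). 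This is precisely the answer to your ``countably many probes against uncountably many finite subsets'' obstacle, and nothing equivalent to it appears in your sketch. The paper then closes the loop with a Galvin-style reflection argument for (b)\(\Rightarrow\)(e): if no finite set served as a good next move for \(\plI\) in \(\Omega FO(X)\), the open sets witnessing the failures would themselves form an \(\omega\)-cover on which \(\plII\)'s strategy produces a contradiction. Two further repairs your write-up would need even granting citations: you invoke only the trivial direction (c)\(\Rightarrow\)(d), whereas Telgarsky's 4.3 gives the converse (d)\(\Rightarrow\)(c) that an equivalence requires; and the paper's additional citation of Tkachuk's 3.18a for (k)\(\Leftrightarrow\)(h) is what welds the two clusters together independently of the finite-powers argument --- with it (and Telgarsky), your remaining work would reduce to handling (e), but as written your graph is disconnected and the decisive implication is left unproven.
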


\begin{proof}
 We will first show that (a) implies (b).
 So assume \(X\) is \(R^+\).
 In \cite{DiasScheepers}, it is shown that \(X^m\) is also \(R^+\) for all finite \(m\).
 Given an \(\omega\)-cover \(\mathcal{U}\), let \((\mathcal{U})^m = \{U^m : U \in \mathcal{U}\}\) and note that \((\mathcal{U})^m\) is an open cover \(X^m\).
 
 Now let \(\sigma_m\) be a winning strategy for \(\plII\) for the Rothberger game on \(X^m\).
 We define a strategy \(\sigma\) for \(\plII\) for \(\schStrongSelGame{\Omega_X}{\Omega_X}\) as follows.
 First let \(b:\omega \to \omega^2\) be a bijection, we will use this to layer the strategies together.
 At round \(n\), let \(m,k \in \omega\) be so that \(b(n) = (m,k)\).
 Suppose \(\plI\) has played \(\mathcal{U}_0,\cdots,\mathcal{U}_n\) up to this point.
 If \(\sigma_m((\mathcal{U}_0)^m,\cdots,(\mathcal{U}_n)^m) = (U_n)^k\), then \(\sigma(\mathcal{U}_0,\cdots,\mathcal{U}_n)\) is set to be \(U_n\).
 This completely defines the strategy \(\sigma\).
 
 Now suppose \(\tau\) is an attack by \(\plI\) against \(\sigma\).
 Say \(\plII\) played \(\{U_n : n \in \omega\}\).
 Suppose \(F \subseteq X\) is finite.
 Say \(|F| = m\), and write \(F = \{x_1,\cdots,x_m\}\).
 As \(\sigma_m\) is referenced infinitely many times throughout the play of this game and is winning for \(\plII\) on \(X^m\), there is an \(n \in \omega\) so that \((x_1,\cdots,x_m) \in (U_n)^m\).
 Then \(F \subseteq U_n\).
 Thus \(\{U_n : n \in \omega\}\) is an \(\omega\)-cover and \(\sigma\) is a winning strategy for \(\plII\).
 Therefore \(X\) is \(\Omega R^+\).

 (a) \(\Leftrightarrow\) (c) is a well-known result of Galvin\cite{Galvin}.

 (c) \(\Leftrightarrow\) (d) is 4.3 of Telgarksy\cite{Telgársky1975}.
  
 (e) \(\Rightarrow\) (d) is clear, but we want to show that (b) \(\Rightarrow\) (e).
 So assume \(X\) is \(\Omega R^+\). 
 Let \(\sigma\) be a winning strategy for \(\plII\) in \(\schStrongSelGame{\Omega_X}{\Omega_X}\). 
 To build a strategy \(\tau\) for \(\plI\) for \(\Omega FO(X)\), let \(s\in T(X)^{<\omega}\). 
 Assume \(\tau(t)\in[X]^{<\omega}\) has been defined for all \(t<s\), and \(\mc U_t\in\Omega_X\) is defined for all \(\emptyset<t\leq s\). 

 Suppose that for all \(F\in[X]^{<\omega}\), there existed \(U_F\in T(X)\) containing \(F\) such that for all \(\mc U\in\Omega_X\), \(U_F\not=\sigma(\<\mc U_{s\rest 1},\dots,U_s,\mc U\>)\). 
 Let \(\mc U=\{U_F:F\in[X]^{<\omega}\}\in\Omega_X\). 
 Then \(\sigma(\<\mc U_{s\rest 1},\dots,\mc U_s,\mc U\>)\) must equal some \(U_F\), demonstrating a contradiction.

 So there exists \(\tau(s)\in[X]^{<\omega}\) such that for all \(U\in T(X)\) containing \(\tau(s)\), there exists \(\mc U_{s\concat\<U\>}\in\Omega_X\) such that \(U=\sigma(\<\mc U_{s\rest 1},\dots,\mc U_s,\mc U_{s\concat\<U\>}\>)\). 
 (To complete the induction, \(\mc U_{s\concat\<U\>}\) may be chosen arbitrarily for all other \(U\in T(X)\).)

 So \(\tau\) is a strategy for \(\plI\) in \(\Omega FO(X)\). 
 Let \(\nu\) legally attack \(\tau\), so \(\tau(\nu\rest n)\subseteq \nu(n)\) for all \(n<\omega\). 
 It follows that \(\nu(n)=\sigma(\<\mc U_{\nu\rest 1},\dots,\mc U_{\nu\rest n},\mc U_{\nu\rest n+1}\>)\). 
 Since \(\<\mc U_{\nu\rest 1},\mc U_{n\rest 2},\dots\>\) is a legal attack against \(\sigma\), it follows that \(\{\sigma(\<\mc U_{\nu\rest 1},\dots,\mc U_{\nu\rest n+1}\>):n<\omega\}=\{\nu(n):n<\omega\}\) is an \(\omega\)-cover. 
 Therefore \(\tau\) is a winning strategy, verifying \(\plI\win\Omega FO(X)\).

 The equivalence of (c), (f), (h), and (i) are given as 3.8 of \cite{Tkachuk3}.

 The equivalence of (f) and (g) are given by Gruenhage \cite{Gruenhage1976}.
  
 The equivalence of (b), (j), and (k) are due to Clontz \cite{Clontz1}.

 (k) \(\Leftrightarrow\) (h) follows from 3.18a of \cite{Tkachuk3}, where
 Tkachuk refers to the \(sCDFT_p\) game as \(CLD(X,p)\).
\end{proof}

In \cite{Tkachuk2}, Tkachuk showed that for \(T_{3.5}\) spaces \(X\), \(X\) is uncountable if and only if \(C_p(X)\) is discretely selective.
We can rewrite this in terms of games using the following proposition.

\begin{proposition}
  For \(T_{3.5}\) spaces \(X\), \(X\) is uncountable if and only \(\plI\notprewin CD(C_p(X))\).
\end{proposition}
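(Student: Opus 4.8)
The plan is to recognize that this proposition is essentially Tkachuk's theorem restated in game-theoretic language, once we unwind what a winning predetermined strategy for \(\plI\) in \(CD(C_p(X))\) actually is. The cited topological input from \cite{Tkachuk2} already gives that, for \(T_{3.5}\) spaces \(X\), \(X\) is uncountable if and only if \(C_p(X)\) is discretely selective. So it suffices to establish the purely game-theoretic equivalence that, for any space \(Y\), \(Y\) is discretely selective if and only if \(\plI\notprewin CD(Y)\), and then specialize to \(Y=C_p(X)\).

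First I would recall that a predetermined strategy for \(\plI\) depends only on the round number, so in \(CD(Y)\) it is nothing more than a sequence \(\{U_n : n\in\omega\}\) of nonempty open subsets of \(Y\) chosen in advance, independently of \(\plII\)'s responses. A legal counterattack by \(\plII\) against such a strategy is precisely a choice of points \(x_n\in U_n\) for each \(n\), and by the winning condition of \(CD(Y)\), \(\plII\) defeats the strategy exactly when \(\{x_n : n\in\omega\}\) is closed discrete.

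Next I would translate the two quantifier patterns. To say \(\sigma=\{U_n\}\) is a winning predetermined strategy for \(\plI\) is to say that for every choice of \(x_n\in U_n\) the set \(\{x_n : n\in\omega\}\) fails to be closed discrete. Hence \(\plI\prewin CD(Y)\) asserts the existence of a sequence of nonempty open sets admitting no closed discrete selection, which is exactly the failure of discrete selectivity for \(Y\). Negating, \(\plI\notprewin CD(Y)\) says that for every sequence \(\{U_n\}\) of nonempty open subsets there exist points \(x_n\in U_n\) with \(\{x_n : n\in\omega\}\) closed discrete, which is verbatim the definition of \(Y\) being discretely selective.

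Finally I would combine the two equivalences: taking \(Y=C_p(X)\), the game-theoretic equivalence gives that \(C_p(X)\) is discretely selective if and only if \(\plI\notprewin CD(C_p(X))\), and Tkachuk's theorem gives that \(C_p(X)\) is discretely selective if and only if \(X\) is uncountable, yielding the result. I expect no genuine obstacle here, since the argument is a definitional unwinding layered on a cited theorem; the only point requiring care is the bookkeeping convention that \(\plI\)'s moves in \(CD\) must be nonempty, so that \(\plII\) can legally respond, which is exactly what makes predetermined strategies for \(\plI\) correspond to the sequences quantified over in the definition of discrete selectivity.
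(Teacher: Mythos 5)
Your proposal is correct and matches the paper's (implicit) argument exactly: the paper states this proposition without proof as a direct game-theoretic translation of Tkachuk's theorem, relying on the same definitional unwinding you give—that a predetermined strategy for \(\plI\) in \(CD(C_p(X))\) is just a pre-chosen sequence of nonempty open sets, so \(\plI\notprewin CD(C_p(X))\) is verbatim the discrete selectivity of \(C_p(X)\). This mirrors the paper's earlier propositions equating selection principles with the nonexistence of winning predetermined strategies, so there is nothing to correct.
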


Combining this with several other results in the literature, we can see that the countability of \(X\) is equivalent to the existence of low information winning strategies for a variety of games.

\begin{theorem}
The following are equivalent for \(T_{3.5}\) spaces \(X\).
 \begin{enumerate}[a)]
  \item \(X\) is countable.
  \item \(\plII\markwin \schStrongSelGame{\mc O_X}{\mc O_X}\), that is, \(X\) is \(R^{+mark}\).
  \item \(\plII\markwin \schStrongSelGame{\Omega_X}{\Omega_X}\), that is, \(X\) is \(\Omega R^{+mark}\).
  \item \(\plI\prewin PO(X)\). 
  \item \(\plI\prewin FO(X)\).
  \item \(\plI\prewin\Omega FO(x)\).
  \item \(C_p(X)\) is first-countable.
  \item \(\plI\prewin \gruConGame{C_p(X)}{\mathbf 0}\).
  \item \(\plI\prewin \gruClusGame{C_p(X)}{\mathbf 0}\).
  \item \(\plI\prewin CL(C_p(X),\mathbf 0)\).
  \item \(\plI\prewin CD(C_p(X))\).
  \item \(\plII\markwin \schStrongSelGame{\Omega_{C_p(X),\mathbf 0}}{\Omega_{C_p(X),\mathbf 0}}\),
        that is, \(C_p(X)\) is \(sCFT^{+mark}\).
  \item \(\plII\markwin \schStrongSelGame{\mc D_{C_p(X)}}{\Omega_{C_p(X),\mathbf 0}}\),
        that is, \(C_p(X)\) is \(sCDFT^{+mark}\).
 \end{enumerate}
\end{theorem}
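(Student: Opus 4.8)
The plan is to anchor the entire list to countability through the two facts we can read off immediately, and then to obtain every remaining equivalence as the limited-information refinement of the corresponding perfect-information equivalence from the previous theorem. The first anchor is (a) $\Leftrightarrow$ (k): the preceding proposition states that $X$ is uncountable exactly when $\plI \notprewin CD(C_p(X))$, so taking contrapositives shows that $X$ is countable iff $\plI \prewin CD(C_p(X))$. The second anchor is (a) $\Leftrightarrow$ (g), which is the classical fact of $C_p$-theory that $C_p(X)$ is first countable precisely when $X$ is countable. With these in hand it suffices to prove that (a) implies each of the game-theoretic statements and that the game-theoretic statements form a single connected web of equivalences meeting $\{(a),(g),(k)\}$.

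For the forward direction I would argue directly from a bijective enumeration. Writing $X = \{x_n : n \in \omega\}$, the predetermined strategy for $\plI$ in $PO(X)$ that plays $x_n$ on round $n$ is winning, since any opens $U_n \ni x_n$ chosen by $\plII$ satisfy $\bigcup_n U_n \supseteq \{x_n : n \in \omega\} = X$; this gives (a) $\Rightarrow$ (d), and the singleton case yields (e). Enumerating instead $[X]^{<\omega} = \{F_n : n \in \omega\}$ and having $\plI$ play $F_n$ on round $n$ forces $\{U_n\}$ to be an $\omega$-cover, giving (f). Dually, the Markov strategy for $\plII$ in the Rothberger (resp. $\Omega$-Rothberger) game that on round $n$ selects from the current cover a set containing $x_n$ (resp. $F_n$) is winning, giving (b) and (c). Finally, when $X$ is countable $C_p(X)$ is first countable at $\mathbf 0$, so fixing a decreasing neighborhood base and having $\plI$ play its $n$-th member is a single winning predetermined strategy for $\gruConGame{C_p(X)}{\mathbf 0}$, $\gruClusGame{C_p(X)}{\mathbf 0}$ and $CL(C_p(X),\mathbf 0)$ at once, giving (h), (i), (j); the Markov strategies witnessing (l) and (m) arise the same way (or by citing Clontz).

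For the linking equivalences I would re-run each citation used in the previous theorem at the reduced information level, using the principle that a predetermined strategy for $\plI$ in a covering-completion game corresponds to a Markov strategy for $\plII$ in the dual selection game. Concretely: Galvin's duality gives (d) $\Leftrightarrow$ (b); Telg\'arsky's $4.3$ gives (d) $\Leftrightarrow$ (e); since every $\omega$-cover is a cover, (f) $\Rightarrow$ (e) is immediate; Tkachuk's $3.8$ gives (d) $\Leftrightarrow$ (h) $\Leftrightarrow$ (j) $\Leftrightarrow$ (k); Gruenhage gives (h) $\Leftrightarrow$ (i); Clontz's limited-information results give (c) $\Leftrightarrow$ (l) $\Leftrightarrow$ (m); and Tkachuk's $3.18$a gives (j) $\Leftrightarrow$ (m). This makes $\{(b),(c),(d),(e),(f),(h),(i),(j),(k),(l),(m)\}$ a single equivalence class which contains (k), and (k) is anchored to (a); since (g) is anchored to (a) as well, the loop closes.

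The main obstacle is exactly the bookkeeping in the previous paragraph: verifying that each duality transports the \emph{level} of information as claimed. For Galvin's and Telg\'arsky's dualities I must check that translating a round-indexed sequence of points or finite sets for $\plI$ produces a selection rule for $\plII$ depending only on the round and the current cover, and conversely, i.e. that ``predetermined'' and ``Markov'' are genuinely dual under these constructions; the same care is needed for the $C_p$-side results of Tkachuk and Clontz. A secondary subtlety, should one prefer to prove (b) $\Rightarrow$ (c) by the product argument of the previous theorem rather than routing through Clontz, is that the layering bijection $b : \omega \to \omega^2$ must have infinite fibers so that each component strategy on $X^m$ is consulted infinitely often while the combined strategy stays Markov; this is avoided entirely by passing through the countability anchors, which is why establishing (a) $\Leftrightarrow$ (g) and (a) $\Leftrightarrow$ (k) cleanly is the crux.
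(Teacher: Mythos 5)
Your forward half and your two anchors are correct and essentially coincide with the paper's proof: (a) \(\Rightarrow\) (d), (e), (f) by enumerating \(X\) and \([X]^{<\omega}\), (a) \(\Rightarrow\) (b), (c) by selecting from each cover a set containing the \(n\)-th enumerated point or finite set, (a) \(\Rightarrow\) (g) and then (h), (i), (j), (l), (m) via a countable decreasing base at \(\mathbf 0\), and (k) \(\Leftrightarrow\) (a) via the preceding proposition. The genuine gap is the backward direction. Galvin's duality, Telg\'arsky's 4.3, Gruenhage's theorem, and Tkachuk's 3.8/3.18a are theorems about \emph{perfect-information} strategies; they assert nothing about predetermined or Markov strategies, so the claimed equivalences (d) \(\Leftrightarrow\) (b), (d) \(\Leftrightarrow\) (e), (h) \(\Leftrightarrow\) (i), (d) \(\Leftrightarrow\) (h) \(\Leftrightarrow\) (j) \(\Leftrightarrow\) (k), and (j) \(\Leftrightarrow\) (m) at the limited-information level cannot be obtained by citation --- they are precisely what has to be proven, and you defer all of them. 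Calling this ``bookkeeping'' understates it: for instance, the Markov-level analogue of Galvin's hard direction (your (b) \(\Rightarrow\) (d)) is proved in the paper by an argument with no counterpart in Galvin's history-dependent construction: if for some \(n\) every \(x\in X\) had a neighborhood \(U_x\) that \(\sigma(\cdot,n)\) never plays, then \(\sigma(\{U_x:x\in X\},n)\notin\{U_x:x\in X\}\), a contradiction; hence for each \(n\) there is a point \(\tau(n)\) all of whose neighborhoods are possible values of \(\sigma(\cdot,n)\), and this \(\tau\) is a winning predetermined strategy in \(PO(X)\).

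Moreover, the paper never transfers Tkachuk's 3.8 or 3.18a to the limited-information level at all; it deliberately avoids needing to, by proving the cheap direct implications instead: (d) \(\Rightarrow\) (a) (if \(x\notin\{\sigma(n):n<\omega\}\), then \(\plII\) playing \(X\setminus\{x\}\) every round defeats \(\sigma\)); (e) \(\Rightarrow\) (a) (the union of \(\plI\)'s predetermined finite sets must equal \(X\), else \(\plII\) plays open sets missing a fixed omitted point, so \(X\) is countable); and (i) \(\Rightarrow\) (h) (play \(\tau(n)=\bigcap_{m\leq n}\sigma(m)\), so that every subsequence of \(\plII\)'s replies is a legal attack on \(\sigma\) and hence clusters, forcing convergence). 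The only citation the paper makes at the Markov level is Clontz for (c) \(\Leftrightarrow\) (l) \(\Leftrightarrow\) (m), which is legitimate because that reference actually treats limited-information strategies. So your proposal inverts the location of the difficulty: the anchors (a) \(\Leftrightarrow\) (g) and (a) \(\Leftrightarrow\) (k) that you identify as the crux are the easy steps, while the transfers you postpone as bookkeeping are the real content, and several of them must in the end be replaced by direct arguments of the kind above rather than ``re-run.''
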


\begin{proof}
 (a) \(\Rightarrow\) (d) is straightforward. 
 So let \(\sigma\) be a predetermined strategy for \(\plI\) in \(PO(X)\). 
 If \(x\not\in\{\sigma(n):n<\omega\}\), let \(f(n)=X\setminus\{x\}\) for all \(n<\omega\). It follows that \(f\) is a legal counter-attack for \(\plII\) defeating \(\sigma\). 
 Thus not (a) implies not (d).

 We now prove that (b) is equivalent to (d).
 Let \(\sigma\) be a winning Markov strategy for \(\plII\) in \(\rothGame{X}\).
 Let \(n<\omega\). Suppose that for each \(x\in X\), there was an open neighborhood \(U_x\) of \(x\) where for every open cover \(\mc U\), \(\sigma(\mc U,n)\not=U_x\). 
 Then \(\sigma(\{U_x : x\in X\},n)\not\in\{U_x:x\in X\}\), a contradiction.

 So for each \(n<\omega\), there exists \(\tau(n)\in X\) such that for any open neighborhood \(U\) of \(\tau(n)\), there exists an open cover \(\mc U_n\) such that \(\sigma(\mc U_n,n)=U\). 
 Then \(\tau\) is a predetermined strategy for \(\plI\) in \(PO(X)\).

 It is also winning: for every attack \(f\) against \(\tau\), note that \(f(n)\) is an open neighborhood of \(\tau(n)\), so choose \(\mc U_n\) such that \(\sigma(\mc U_n,n)=f(n)\). 
 Then since \(\<\mc U_0,\mc U_1,\dots\>\) is a legal attack against \(\sigma\), it follows that \(\{f(n):n<\omega\}\) is an open cover of \(X\). 
 Therefore \(\tau\) is a winning predetermined strategy.
 So (b) implies (d).

 Now let \(\sigma\) be a winning predetermined strategy for \(\plI\) in \(PO(X)\). 
 For an open cover \(\mc U\) of \(X\) and \(n<\omega\), let \(\tau(\mc U,n)\) be any open set in \(\mc U\) containing \(\sigma(n)\). 
 It follows that \(\tau\) is a winning Markov strategy for \(\plII\) in \(\rothGame{X}\).
 Thus (d) implies (b).
 
 The previous paragraphs are easily modified to see that (c) is equivalent to (f).

 Clearly (d) implies (e), so we will see that (e) implies (a). 
 Let \(\sigma(n)\) be a predetermined strategy for I for \(FO(X)\). 
 Towards a contradiction, suppose that there is some \(x \in X \smallsetminus \bigcup_n \sigma(n)\).
 II could then play \(FO(X)\) as follows.
 At round \(n\) II can play an open set \(U_n\) which contains \(\sigma(n)\) but excludes \(x\).
 Then \(x \notin \bigcup_n U_n\), and so I has lost.
 This is a contradiction.
 So \(X = \bigcup_n \sigma(n)\), which means it is countable.

 It also clear that (f) implies (e), we will show that (a) implies (f).
 If \(X\) is countable, then so is \([X]^{<\omega}\), enumerate it as \(\{s_n : n \in \omega\}\).
 I's predetermined strategy for \(\Omega FO(X)\) is to play \(s_n\) are round \(n\).
 Clearly whatever II plays will be an \(\omega\)-cover.
 Thus (a) - (f) are equivalent.

 It is well-known and easy to see that (a) is equivalent to (g).

 To see that (g) implies (h), note that we can find a sequence of open sets \(U_n\) so that \(\mathbf 0 \in U_{n+1} \subseteq \overline{U_{n+1}} \subseteq U_n\) for all \(n\).
 I simply plays \(U_n\) at turn \(n\), and whatever \(x_n\) are played by II must converge to \(x\).

 Clearly (h) implies (j) which in turn implies (k), which is equivalent to (a) as noted before this theorem.

 (h) \(\Rightarrow\) (i) is evident; for the converse, let \(\tau(n)=\bigcap_{m\leq n}\sigma(m)\)
 where \(\sigma\) guarantees clustering. It follows that \(\tau\) guarantees that every subsequence
 clusters, and thus guarantees convergence.

 Clontz showed that (c), (l), and (m) are equivalent in \cite{Clontz1}.
 This completes the proof.

\end{proof}

In \cite{Tkachuk3}, Tkachuk characterizes \(\plII\win\Omega FO(X)\) as the second player having an ``almost winning strategy'' (\(\plII\) can prevent \(\plI\) from constructing an \(\omega\)-cover but perhaps not an arbitrary open cover) in \(PO(X)\), which he conflates with \(FO(X)\) as they are equivalent for ``completely'' winning perfect information strategies. 

But they cannot be interchanged in general.
\begin{proposition}
  Suppose \(X\) is \(T_1\).
  Then \(\plII\tactwin\Omega PO(X)\) if and only if \(|X| > 1\).
\end{proposition}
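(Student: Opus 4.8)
The plan is to prove the two directions separately, with essentially all of the content in the backward implication. For ($\Leftarrow$), suppose $|X|>1$ and fix distinct points $a,b\in X$. Because $X$ is $T_1$, both $X\setminus\{a\}$ and $X\setminus\{b\}$ are open, and each contains every point of $X$ except the one that was removed. I would then define a tactic $\sigma$ for $\plII$ by letting $\sigma(x)=X\setminus\{a\}$ whenever $x\neq a$, and $\sigma(a)=X\setminus\{b\}$. This $\sigma$ depends only on $\plI$'s most recent point, so it is genuinely a tactic, and it is legal: if $x\neq a$ then $x\in X\setminus\{a\}$, while $a\in X\setminus\{b\}$ since $a\neq b$.

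To see that $\sigma$ is winning, let $\plI$ attack with any sequence $\langle x_0,x_1,\dots\rangle$. Each response $\sigma(x_n)$ equals one of the two fixed sets $X\setminus\{a\}$ or $X\setminus\{b\}$, and neither of these contains the finite set $F=\{a,b\}$. Hence no member of $\{\sigma(x_n):n<\omega\}$ contains $F$, so this family is not an $\omega$-cover; witnessed by $F$, $\plII$ wins the play. Therefore $\plII\tactwin\Omega PO(X)$.

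For ($\Rightarrow$) I would argue the contrapositive, showing that $|X|\leq 1$ forces $\plII$ to have no winning strategy at all, and a fortiori no winning tactic. Taking $X$ nonempty, the remaining case is $X=\{p\}$: since $X$ is $T_1$, its only open set containing $p$ is $X$ itself, so against $\plI$'s forced play $p$ in each round $\plII$ can only respond with $X$. Then $\{X\}$ is an $\omega$-cover of $X$ and $\plI$ wins every play, so $\plII$ has no winning strategy.

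The proposition is elementary, so there is no deep obstacle; the one point requiring care is the interaction between legality and the covering condition. A tactic that always deleted the same point $a$ would fail to be legal exactly when $\plI$ plays $a$, which is why the definition must switch to deleting $b$ in that single case — and the $T_1$ separation is precisely what makes the needed cofinite sets open. I would double-check that $F=\{a,b\}$ genuinely witnesses the failure of the $\omega$-cover condition and that $\sigma$ is legal in both branches of its definition.
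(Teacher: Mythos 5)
Your proof is correct and follows essentially the same approach as the paper: the singleton case is dispatched by noting $\plI$ forces the $\omega$-cover $\{X\}$, and for $|X|>1$ your tactic (delete $a$ unless $\plI$ plays $a$, in which case delete $b$) is exactly the paper's tactic with $x_1,x_2$ renamed to $a,b$, with the same witness $\{a,b\}$ showing no $\omega$-cover is ever produced.
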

\begin{proof}
  First suppose that \(X=\{x\}\).
  Then \(\plI\) wins \(\Omega PO(X)\) by just playing \(x\) in round 1.
  So \(\plII\) does not have a winning tactic for \(\Omega PO(X)\).
  
  Now suppose that \(X\supseteq\{x_1,x_2\}\) for \(x_1\not=x_2\).
  Then let \(\sigma(x_1)=X\setminus\{x_2\}\), and \(\sigma(x)=X\setminus\{x_1\}\)
  otherwise. It follows that \(\{x_1,x_2\}\) is never contained in any
  set played by \(\sigma\), so \(\sigma\) never produces an \(\omega\)-cover,
  and thus is a winning tactic.
\end{proof}

However, if \(X\) is countable, then \(X\) is \(\Omega R^{+mark}\) and therefore \(\plI\prewin \Omega FO(X)\). 
So \(\Omega PO(X)\) is a very different game than those described previously.

\section{Strategies for player II for the Discrete Selection Game on \(C_p(X)\)}

Now we turn our attention to the opponent.
Our first observations hold for all spaces (not just \(T_{3.5}\) spaces or \(C_p(X)\)).
Consider the following games related to open covers.

\begin{proposition}
 The following are equivalent for all spaces \(X\).
 \begin{enumerate}[a)]
  \item \(\plII\win PO(X)\).
  \item \(\plII\markwin PO(X)\).
  \item \(\plII\win FO(X)\).
  \item \(\plII\markwin FO(X)\).
  \item \(\plI\win \rothGame{X}\).
  \item \(\plI\prewin \rothGame{X}\), that is, \(X\) is not \(R\).
 \end{enumerate}
\end{proposition}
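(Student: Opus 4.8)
The plan is to prove the cycle \((f)\Rightarrow(d)\Rightarrow(c)\Rightarrow(a)\Rightarrow(e)\Rightarrow(f)\) and to observe that (b) is caught between the trivial implications \((d)\Rightarrow(b)\Rightarrow(a)\). Several arrows are immediate: a winning Markov strategy is in particular a winning strategy, giving \((d)\Rightarrow(c)\) and \((d)\Rightarrow(b)\); and since singletons are finite sets, any winning strategy for \(\plII\) in \(FO(X)\) is already one in \(PO(X)\), giving \((c)\Rightarrow(a)\) and \((b)\Rightarrow(a)\). For the chain \((a)\Rightarrow(e)\Rightarrow(f)\) I would appeal to two classical facts. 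Galvin's duality between the point--open and Rothberger games yields \(\plII\win PO(X)\) iff \(\plI\win\rothGame{X}\) (the companion of the \(\plII\win\rothGame{X}\Leftrightarrow\plI\win PO(X)\) equivalence cited earlier), which is exactly \((a)\Leftrightarrow(e)\); and Pawlikowski's theorem that \(\plI\) wins the Rothberger game precisely when \(X\) fails to be Rothberger gives \((e)\Leftrightarrow(f)\), where the identification of \(\plI\prewin\rothGame{X}\) with ``\(X\) is not \(R\)'' is the earlier Proposition applied to \(\mc A=\mc B=\mc O_X\).

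Everything then reduces to the single new implication \((f)\Rightarrow(d)\). First I would unwind \((d)\): a Markov strategy \(\tau\) for \(\plII\) in \(FO(X)\) presents, at each round \(i\), the \(\omega\)-cover \(\{\tau(F,i):F\in[X]^{<\omega}\}\), and \(\tau\) is winning exactly when this sequence of \(\omega\)-covers admits no transversal (one set chosen from each cover) whose union is \(X\). Thus \((d)\) is equivalent to \(X\notin\schStrongSelProp{\Omega_X}{\mc O_X}\), while \((f)\) is \(X\notin\schStrongSelProp{\mc O_X}{\mc O_X}\); so it suffices to convert a sequence of open covers admitting no covering transversal into a sequence of \emph{\(\omega\)-covers} admitting no covering transversal.

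Here is the construction I would use. Given open covers \(\langle\mc U_n:n<\omega\rangle\) witnessing \((f)\), fix a bijection \(\omega\times\omega\to\omega\), relabel them as \(\langle\mc U_{i,j}:i,j<\omega\rangle\), and for each \(i\) put
\[
 \mc W_i=\Bigl\{\,\textstyle\bigcup_{j\in F}V_j : F\in[\omega]^{<\omega},\ V_j\in\mc U_{i,j}\ (j\in F)\,\Bigr\},
\]
the finite unions that use at most one set from each column \(\mc U_{i,j}\). I would then check two things. Each \(\mc W_i\) is an \(\omega\)-cover: a finite \(G=\{x_1,\dots,x_r\}\) is contained in \(V_1\cup\dots\cup V_r\in\mc W_i\) once \(V_j\in\mc U_{i,j}\) is chosen with \(x_j\in V_j\). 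And \(\langle\mc W_i\rangle\) has no covering transversal: a transversal \(W_i=\bigcup_{j\in F_i}V^i_j\) extends, by arbitrary choices in the unused columns, to a transversal of \(\langle\mc U_{i,j}\rangle\) whose union contains \(\bigcup_i W_i\), so \((f)\) forces \(\bigcup_i W_i\neq X\). Hence \(\langle\mc W_i\rangle\) witnesses \((d)\), and the rule sending \((F,i)\) to any member of \(\mc W_i\) containing \(F\) is the desired winning Markov strategy for \(\plII\) in \(FO(X)\).

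The hard part is precisely the design of the \(\mc W_i\). The naive candidate---\emph{all} finite unions of members of the \(i\)-th cover---is certainly an \(\omega\)-cover but ruins the non-covering property, since a single transversal set may then swallow many members of one \(\mc U_n\), and a transversal of the \(\mc W_i\) could cover \(X\) even when no transversal of the \(\mc U_n\) does. Forcing each union to meet every original cover at most once is the one idea that reconciles ``\(\mc W_i\) is an \(\omega\)-cover'' with ``transversals of \(\langle\mc W_i\rangle\) still fail to cover''. With \((f)\Rightarrow(d)\) in hand the cycle closes, and all six statements are equivalent.
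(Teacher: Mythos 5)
Your proposal is correct, but it takes a genuinely different route from the paper's. The paper assembles the equivalence from known results plus two direct strategy translations: it cites Telg\'arsky for \(\plII\win PO(X)\Leftrightarrow\plII\win FO(X)\), Galvin for \(\plII\win PO(X)\Leftrightarrow\plI\win\rothGame{X}\), Pawlikowski for (e)\(\Leftrightarrow\)(f), and then proves (f)\(\Leftrightarrow\)(b) (predetermined Rothberger strategies for \(\plI\) versus Markov strategies for \(\plII\) in \(PO(X)\)) and (b)\(\Rightarrow\)(d) by hand, the latter using a bijection \(\omega^2\to\omega\) to distribute the rounds of a Markov \(PO(X)\) strategy across the points of a finite set. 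You instead close the cycle (f)\(\Rightarrow\)(d)\(\Rightarrow\)(c)\(\Rightarrow\)(a)\(\Rightarrow\)(e)\(\Rightarrow\)(f) with (b) sandwiched by trivial arrows, so all the new work sits in (f)\(\Rightarrow\)(d), which you carry out at the level of covers rather than strategies: your column construction (reindex the witnessing covers as \(\<\mc U_{i,j}\>\), let \(\mc W_i\) be the finite unions using at most one set per column) is sound, since each \(\mc W_i\) is an \(\omega\)-cover while any transversal of \(\<\mc W_i\>\) extends to a transversal of the \(\mc U_n\) and hence fails to cover \(X\). In selection-principle terms you have proved the nontrivial half of \(\schStrongSelProp{\Omega_X}{\mc O_X}\Leftrightarrow\schStrongSelProp{\mc O_X}{\mc O_X}\), and interestingly your construction and the paper's (b)\(\Rightarrow\)(d) argument use the same \(\omega^2\to\omega\) redistribution trick, just aimed at different pairs of statements. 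What your route buys: it is self-contained modulo Galvin and Pawlikowski (no appeal to Telg\'arsky's 4.4), and it makes transparent why the strongest-looking item (d) follows from the weakest-looking item (f). What the paper's route buys: shorter new arguments by leaning on the literature. One cosmetic slip worth fixing: your justifications for the four easy arrows are crossed --- (d)\(\Rightarrow\)(b) and (c)\(\Rightarrow\)(a) are the ones that use ``singletons are finite sets,'' while (d)\(\Rightarrow\)(c) and (b)\(\Rightarrow\)(a) are the ones that use ``a Markov strategy is in particular a strategy''; all four remain immediate, so this does not affect correctness.
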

\begin{proof}
 (a) \(\Leftrightarrow\) (c) is 4.4 of Telgarksy\cite{Telgársky1975}.

 The duality of \(PO(X)\) and \(\rothGame{X}\) for both players when considering perfect information is a well-known result of Galvin\cite{Galvin}. 
 So (a) is equivalent to (e).

 The equivalence of (e) and (f) is just a restatement of Pawlikowski's result that the Rothberger selection principle is equivalent to \(\plI\notwin \rothGame{X}\)\cite{Pawlikowski}, since the Rothberger selection principle is equivalent to \(\plI\notprewin\rothGame{X}\).

 We now prove that (f) and (b) are equivalent.
 Let \(\sigma\) be a winning predetermined strategy for \(\plI\) in \(\rothGame{X}\).
 For \(x\in X\) and \(n<\omega\), let \(\tau(x,n)\) be any open set in \(\sigma(n)\) containing \(x\). 
 It follows that \(\tau\) is a winning Markov strategy for \(\plII\) in \(PO(X)\).

 Now let \(\sigma\) be a winning Markov strategy for \(\plII\) in \(PO(X)\).
 We may defined the open cover \(\tau(n)=\{\sigma(x,n):x\in X\}\) of \(X\).
 It follows that \(\tau\) is a winning predetermined strategy for \(\plI\) in \(\rothGame{X}\).

 Finally, (d) implies (b) is obvious.
 We therefore finish the proof by showing that (b) implies (d).
 Let \(b:\omega^2\to\omega\) be a bijection.
 Given a winning Markov strategy \(\sigma\) for \(\plII\) in \(PO(X)\), define \(\tau(F_n,n)=\bigcup\{\sigma(x(i,n),b(i,n)):i<\omega\}\) where \(F_n=\{x(i,n):i<\omega\}\) (this indexing will cause at least one point to be repeated infinitely often, but this won't be a problem). 
 So given an attack \(\<F_0,F_1,\dots\>\) against \(\tau\), consider the attack \(g\) against \(\sigma\), where \(g(n)=x(m,k)\), where \(b(m,k) = n\). 
 It follows that
 \[
   X \not= \bigcup\{\sigma(g(n),n):n<\omega\} = \bigcup\{\sigma(x(i,n),b(i,n)):i,n<\omega\} = \bigcup\{\tau(F_n,n):n<\omega\}
 \]
 and therefore \(\tau\) is a winning Markov strategy for \(\plII\).
 Thus (b) implies (d).
\end{proof}

Similar results hold for games related to \(\omega\)-covers.

\begin{proposition}
The following are equivalent for all spaces \(X\).
 \begin{enumerate}[a)]
  \item \(\plII\win \Omega FO(X)\).
  \item \(\plII\markwin \Omega FO(X)\).
  \item \(\plI\win G_1(\Omega_X,\Omega_X)\).
  \item \(\plI\prewin G_1(\Omega_X,\Omega_X)\), that is, \(X\) is not \(\Omega R\).
 \end{enumerate}
\end{proposition}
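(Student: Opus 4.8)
The plan is to treat this as the \(\omega\)-cover counterpart of the preceding proposition, but in streamlined form: since an \(\omega\)-cover is defined directly through finite sets, the finite-open side requires no diagonalization (contrast the \(b:\omega^2\to\omega\) device used there), and the degenerate game \(\Omega PO(X)\) is correctly absent. I would run the single cycle (c)\(\Rightarrow\)(d)\(\Rightarrow\)(b)\(\Rightarrow\)(a)\(\Rightarrow\)(c); three of these arrows are routine shadow-play translations, and one, the collapse (c)\(\Rightarrow\)(d), carries all the content.

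For (d)\(\Rightarrow\)(b), let \(\sigma\) be a winning predetermined strategy for \(\plI\) in \(G_1(\Omega_X,\Omega_X)\), so each \(\sigma(n)\) is an \(\omega\)-cover and no selection \(U_n\in\sigma(n)\) forms an \(\omega\)-cover. I define a Markov strategy \(\tau\) for \(\plII\) in \(\Omega FO(X)\) by letting \(\tau(F,n)\) be any \(U\in\sigma(n)\) with \(F\subseteq U\), which exists because \(\sigma(n)\) is an \(\omega\)-cover. Every run against \(\tau\) then produces selections from the \(\sigma(n)\), which cannot form an \(\omega\)-cover, so \(\plII\) wins and \(\plII\markwin\Omega FO(X)\). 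For (a)\(\Rightarrow\)(c), let \(\sigma\) be a winning strategy for \(\plII\) in \(\Omega FO(X)\). I build a strategy for \(\plI\) in \(G_1(\Omega_X,\Omega_X)\) that reconstructs a shadow \(\Omega FO\)-attack: having recorded \(F_0,\dots,F_{n-1}\), \(\plI\) plays the \(\omega\)-cover \(\{\sigma(\<F_0,\dots,F_{n-1},F\>):F\in[X]^{<\omega}\}\); whatever \(U_n\) the opponent selects equals \(\sigma(\<F_0,\dots,F_{n-1},F_n\>)\) for some \(F_n\), which \(\plI\) records. The induced \(\Omega FO\)-attack \(\<F_0,F_1,\dots\>\) is legal, so the selections \(\{\sigma(\<F_0,\dots,F_n\>)\}=\{U_n\}\) do not form an \(\omega\)-cover, and \(\plI\) wins \(G_1(\Omega_X,\Omega_X)\). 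The key point is that \(F_n\) is reconstructed \emph{after} the opponent's move is seen, so this direction needs no compactness argument (unlike the corresponding step of the first theorem). Finally, (b)\(\Rightarrow\)(a) is immediate, since a Markov strategy is in particular a strategy.

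The remaining arrow (c)\(\Rightarrow\)(d) is the main obstacle, and it is the \(\omega\)-cover analog of Pawlikowski's theorem. By the earlier proposition that \(\schStrongSelProp{\mathcal{A}}{\mathcal{B}}\) holds if and only if \(\plI\notprewin\schStrongSelGame{\mathcal{A}}{\mathcal{B}}\), statement (d) is exactly the assertion that \(X\) is not \(\Omega R\); so (c)\(\Rightarrow\)(d) says that if \(X\) is \(\Omega\)-Rothberger then \(\plI\) has no winning strategy in \(G_1(\Omega_X,\Omega_X)\). I would obtain this from the general \(G_1\)-game duality theory of Clontz's prior work (the \(\omega\)-cover form of Pawlikowski's theorem), or prove it directly by adapting Pawlikowski's strategy-unrolling argument to \(\omega\)-covers. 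The finite-powers heuristic makes it believable: \(\schStrongSelProp{\Omega_X}{\Omega_X}\) is equivalent to every finite power \(X^m\) being Rothberger (via \(\mathcal{U}\mapsto\{U^m:U\in\mathcal{U}\}\)), and Pawlikowski's theorem defeats \(\plI\) on each \(X^m\). The difficulty I expect is precisely that a single strategy for \(\plI\) in \(G_1(\Omega_X,\Omega_X)\) couples all rounds together across every power simultaneously, so one cannot defeat it power-by-power with independent selections; handling this coupling is what requires the full Pawlikowski machinery. Once (c)\(\Rightarrow\)(d) is in hand, the cycle closes and all four statements are equivalent.
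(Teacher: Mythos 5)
Your proposal is correct and follows essentially the same route as the paper: the same cycle of implications, with the same constructions for (a)\(\Rightarrow\)(c) (the shadow-attack \(\omega\)-covers \(\{\sigma(s\concat\langle F\rangle):F\in[X]^{<\omega}\}\)), for (d)\(\Rightarrow\)(b) (select a member of \(\sigma(n)\) containing \(F\)), and for the trivial (b)\(\Rightarrow\)(a), with the hard equivalence of (c) and (d) deferred to the literature. The paper cites Theorem 2 of Scheepers (1997) for that step rather than Clontz or a direct adaptation of Pawlikowski, but this is exactly the \(\omega\)-cover analog of Pawlikowski's theorem that you identify.
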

\begin{proof}
 Let \(\sigma\) be a winning strategy for \(\plII\) in \(\Omega FO(X)\).
 For \(s\in([X]^{<\omega})^{<\omega}\), let \(\mc U_s=\{\sigma(s\concat\<F\>):F\in[X]^{<\omega}\}\). 
 Define the strategy \(\tau\) for \(\plI\) for \(G_1(\Omega_X,\Omega_X)\) recursively as follows.
 \begin{itemize}
     \item \(\tau\) opens with \(\mathcal{U}_\emptyset\). 
     That is \(\tau(\emptyset) = \mathcal{U}_\emptyset = \{\sigma(F) : F \in [X]^{<\omega}\}\).
     \item \(\plII\) must respond with some \(\sigma(F)\). \(\tau\) then plays \(\mathcal{U}_{<F>}\).
     \item At round \(n+1\), \(\plII\) will have just played some \(\sigma(F_0,\cdots,F_n)\). \(\tau\) will respond with \(\mathcal{U}_{<F_0,\cdots,F_n>}\).
 \end{itemize}
 This defines \(\tau\).
 Now suppose \(f\) is an attack by \(\plII\) against \(\tau\).
 \(f\) must look like \(\sigma(F_0),\sigma(F_0,F_1),\cdots\) for finite sets \(F_n \subseteq X\).
 As \(\sigma\) is winning for \(\plII\) in \(\Omega FO(X)\), it must be that \(\{\sigma(F_0),\sigma(F_0,F_1),\cdots\}\) is not an \(\omega\)-cover.
 So \(\tau\) is a winning strategy for \(\plI\) for \(G_1(\Omega_X,\Omega_X)\) and thus (a) implies (c).

 The equivalence of (c) and (d) is given by theorem 2 of \cite{Scheepers1997}.

 Let \(\sigma\) be a winning predetermined strategy for \(\plI\) in \(G_1(\Omega_x,\Omega_x)\). For \(F\in[X]^{<\omega}\) and \(n<\omega\), let \(\tau(F,n)\) be any open set in \(\sigma(n)\) containing \(F\). 
 It follows that \(\tau\) is a winning Markov strategy for \(\plII\) in \(\Omega FO(X)\), verifying that (d) implies (b).

 (b) implies (a) is trivial, so the proof is complete.
\end{proof}

\(\Omega R\) is equivalent to all finite powers being \(R\): see theorem 3 of \cite{Scheepers1997}. 
But \(\Omega R\) and \(R\) do not coincide in all models of \(ZFC\): see theorem 9 of \cite{BABINKOSTOVA2013} for a consistent example of a \(T_{3.5}\) \(R\) space \(X\) such that \(X^2\) is not \(R\), so therefore \(X\) is not \(\Omega R\). 
Note the distinction with strategies for the opponent, as \(R^+\) is equivalent to \(\Omega R^+\) and \(R^{+mark}\) is equivalent to \(\Omega R^{+mark}\).

Finally we will examine the point-picking games.

\begin{proposition}\label{lowerPropA}
The following properties imply lower properties for all spaces \(X\) and \(x\in X\).
 \begin{enumerate}[a)]
  \item \(\plI\win\schStrongSelGame{\mc D_X}{\Omega_{X,x}}\).
  \item \(\plII\win CL(X,x)\).
  \item \(\plII\win\gruClusGame{X}{x}\).
  \item \(\plI\win\schStrongSelGame{\Omega_{X,x}}{\Omega_{X,x}}\).
 \end{enumerate}
\end{proposition}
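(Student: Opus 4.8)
The plan is to prove the three consecutive implications (a)\(\Rightarrow\)(b), (b)\(\Rightarrow\)(c), and (c)\(\Rightarrow\)(d) by strategy transfer, since each clause asserts that \emph{some} player can keep \(x\) away from the points produced during play. In each step I would convert a winning strategy for one game into one for the next by simulating the first game inside the second, exploiting that all four games share the shape ``one player offers a set with \(x\) in its closure, the other picks a point of it.''

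For (a)\(\Rightarrow\)(b), given a winning strategy \(\sigma\) for \(\plI\) in \(\schStrongSelGame{\mc D_X}{\Omega_{X,x}}\), I would have \(\plII\) in \(CL(X,x)\) run a simulated selection game in which \(\plII\) plays the role of \(\plI\) via \(\sigma\). When \(\plI\) of the closure game offers a nonempty open \(U_n\), \(\plII\) computes the dense set \(D_n=\sigma(\langle b_0,\dots,b_{n-1}\rangle)\) and answers with any \(b_n=x_n\in D_n\cap U_n\); this intersection is nonempty precisely because \(D_n\) is dense and \(U_n\) is a nonempty open set. The points \(b_n\) then constitute a legal run of \(\plII\) against \(\sigma\), so \(x\notin\overline{\{b_n\}}=\overline{\{x_n\}}\) and \(\plII\) wins. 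For (b)\(\Rightarrow\)(c), note that \(\gruClusGame{X}{x}\) is simply \(CL(X,x)\) with \(\plI\) restricted to neighborhoods of \(x\) and with the payoff for \(\plI\) strengthened from ``\(x\in\overline{\{x_n\}}\)'' to ``\(x\) is a cluster point of \((x_n)\)'', both of which only help \(\plII\); hence a winning \(\plII\)-strategy for \(CL(X,x)\) is, verbatim, a winning \(\plII\)-strategy for \(\gruClusGame{X}{x}\).

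The crux is (c)\(\Rightarrow\)(d). Given a winning \(\plII\)-strategy \(\tau\) for \(\gruClusGame{X}{x}\), I would build a winning \(\plI\)-strategy for \(\schStrongSelGame{\Omega_{X,x}}{\Omega_{X,x}}\) by a neighborhood-tree simulation. Having recorded neighborhoods \(V_0,\dots,V_{n-1}\) from earlier rounds, \(\plI\) plays the blade \(A_n=\{\tau(\langle V_0,\dots,V_{n-1},V\rangle):V\text{ a neighborhood of }x\}\); since each listed point lies in its corresponding \(V\), the set \(A_n\) meets every neighborhood of \(x\) and so genuinely belongs to \(\Omega_{X,x}\). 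When \(\plII\) answers with \(b_n\in A_n\), that choice exhibits some neighborhood \(V_n\) with \(b_n=\tau(\langle V_0,\dots,V_n\rangle)\), which \(\plI\) records. At the end, \(\langle V_0,V_1,\dots\rangle\) is a legal attack in the clustering game whose \(\tau\)-responses are exactly \(b_0,b_1,\dots\), so \(\tau\) guarantees that \(x\) is not a cluster point of \((b_n)\).

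The main obstacle is closing the gap between the two payoffs: ``\(x\) is not a cluster point of \((b_n)\)'' only yields a neighborhood \(W\) containing \(b_n\) for finitely many \(n\), whereas \(\plI\) needs a neighborhood meeting \(\{b_n\}\) not at all. I would dispose of the finitely many exceptional points as follows. First, I may assume \(\tau\) never outputs \(x\): if \(x\) is isolated then \(\plII\) cannot win \(\gruClusGame{X}{x}\) at all (player \(\plI\) forces \(x_n=x\) by repeatedly playing \(\{x\}\)), so (c) fails and the implication is vacuous; and if \(x\) is non-isolated then every neighborhood of \(x\) contains a point other than \(x\), so replacing each output \(x\) by such a point alters \(\tau\) only finitely often along any run and preserves its winning property. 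With \(b_n\neq x\) for all \(n\), the finitely many \(b_n\) lying in \(W\) are all distinct from \(x\), so (using that the relevant spaces are \(T_1\)) deleting them from \(W\) produces an open neighborhood of \(x\) disjoint from \(\{b_n\}\), giving \(x\notin\overline{\{b_n\}}\). I expect this cluster-point-versus-closure bookkeeping, rather than the simulations themselves, to be the only delicate point.
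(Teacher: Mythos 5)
Your three simulations are exactly the ones in the paper's proof: your (a)\(\Rightarrow\)(b) and (b)\(\Rightarrow\)(c) arguments are, step for step, the paper's arguments, and they are correct. In (c)\(\Rightarrow\)(d) you also use the paper's one-step lookahead blades \(A_n=\{\tau(\langle V_0,\dots,V_{n-1},V\rangle):V\in T_{X,x}\}\), but you have in fact spotted a real subtlety that the paper's own proof passes over silently: the paper simply asserts that, \(\sigma\) being winning for \(\plII\) in \(\gruClusGame{X}{x}\), the \emph{set} of its outputs along the simulated run is not in \(\Omega_{X,x}\), whereas winning only guarantees that \(x\) is not a cluster point of the output \emph{sequence}; the two differ exactly on the finitely many outputs that lie in every neighborhood of \(x\). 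Your repair for outputs equal to \(x\) is sound: if \(x\) is isolated then (c) is vacuous (under the sequence reading of clustering, \(\plI\) wins by playing \(\{x\}\) forever), and otherwise the replacement changes any given run in only finitely many coordinates, which cannot affect clustering since that is a tail property.

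However, your proof does not establish the proposition as stated, because the statement quantifies over \emph{all} spaces and your final step explicitly invokes \(T_1\); you even flag this yourself. Without \(T_1\), the problem is not only picks equal to \(x\): a pick \(b_n\neq x\) may still satisfy \(x\in\overline{\{b_n\}}\), i.e.\ lie in every neighborhood of \(x\), and then the set of picks is a blade of \(x\) no matter what the tail of the sequence does, so \(\plII\) wins. Your blades do not exclude such points, so the constructed strategy genuinely fails: e.g.\ let \(X=\{x,y\}\cup\omega_1\) with the points of \(\omega_1\) and \(y\) isolated and with neighborhoods of \(x\) of the form \(\{x,y\}\cup C\), \(C\subseteq\omega_1\) co-countable; a winning \(\tau\) for \(\plII\) in \(\gruClusGame{X}{x}\) may answer \(y\) in round \(0\) and distinct countable ordinals afterwards, it never outputs \(x\), yet your first blade is then \(\{y\}\) and \(\plII\) wins the selection game by picking \(y\) (while \(\plI\win\schStrongSelGame{\Omega_{X,x}}{\Omega_{X,x}}\) does hold there, via the blade \(\omega_1\)). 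To close the gap, let \(P=\{p\in X:x\in\overline{\{p\}}\}\) and force the blades to avoid \(P\) by a multi-step lookahead: if from some \(\tau\)-consistent position every response of \(\tau\) to every nonempty finite sequence of neighborhoods contained in a fixed \(W\in T_{X,x}\) lay in \(P\), then \(\plI\) of the clustering game could play \(W\) forever from that position, all subsequent responses would lie in \(P\) and hence in every neighborhood of \(x\), and the run would cluster at \(x\), contradicting that \(\tau\) wins. So from each position and each \(W\) there is a finite continuation inside \(W\) whose response lies in \(W\setminus P\); the collection of all such responses is a blade disjoint from \(P\). Playing these blades and absorbing the skipped intermediate responses into the simulated run, your own finishing argument now works in general: some neighborhood \(W\) misses a tail of the full simulated response sequence, each of the finitely many earlier picks, being outside \(P\), is missed by some neighborhood of \(x\), and the intersection of these finitely many neighborhoods witnesses \(x\notin\overline{\{b_n:n\in\omega\}}\).
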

\begin{proof}
 Begin by letting \(\sigma\) be a winning strategy for \(\plI\) in \(\schStrongSelGame{\mc D_X}{\Omega_{X,x}}\). 
 For \(s\in T_{X}^{<\omega}\), assume \(\tau(s\rest i+1)\) is defined for \(i<|s|\), defining \(s'\in X^{|s|}\) by \(s'(i)=\tau(s\rest i+1)\), and let \(\tau(s\concat\<U\>)\in\sigma(s')\cap U\). 
 So \(\tau\) is a strategy for \(\plII\) in \(CL(X,x)\). Then for any attack \(f\) against \(\tau\), an attack \(f'\) against \(\sigma\) is defined by \(f'(i)=\tau(f\rest i+1)\). 
 It follows that \(\{f'(i):i<\omega\}=\{\tau(f\rest i+1):i<\omega\}\not\in\Omega_{X,x}\), so \(\tau\) is a winning strategy, witnessing (a) implies (b).

 Let \(\sigma\) be a winning strategy for \(\plII\) in \(CL(X,x)\). 
 Then \(\sigma\) is also a winning strategy for \(\plII\) in \(\gruClusGame{X}{x}\), so (b) implies (c).

 Given a winning strategy \(\sigma\) for \(\plII\) in \(\gruClusGame{X}{x}\), let \(s\in {T_{X,x}}^{<\omega}\) and suppose and \(B_t\in\Omega_{X,x}\) is defined for all \(t<s\). 
 Then let \(B_s=\{\sigma(s\concat\<U\>):U\in T_{X,x}\}\); it's clear that \(B_s\in\Omega_{X,x}\). 
 Define \(\tau\) for \(\plI\) in \(\schStrongSelGame{\Omega_{X,x}}{\Omega_{X,x}}\) by \(\tau(r)=B_{r'}\) where \(r'\in {T_{X,x}}^{|r|}\) satisfies \(r(i)=\sigma(r'\rest i+1)\) for all \(i<|r|\). 
 Then an attack \(f\) against \(\tau\) yields an attack \(f'\) against \(\sigma\) such that \(f(i)=\sigma(f'\rest i+1)\) for all \(i<\omega\). 
 Since \(\sigma\) is a winning strategy, it follows that \(\{f(i):i<\omega\}=\{\sigma(f'\rest i+1):i<\omega\}\not\in\Omega_{X,x}\). 
 This verifies (c) implies (d).
\end{proof}

\begin{proposition}\label{lowerPropB}
 The following properties imply lower properties for all spaces \(X\) and \(x\in X\).
 \begin{enumerate}[a)]
  \item \(\plI\prewin\schStrongSelGame{\mc D_X}{\Omega_{X,x}}\).
  \item \(\plII\markwin CL(X,x)\).
  \item \(\plII\markwin\gruClusGame{X}{x}\).
  \item \(\plI\prewin\schStrongSelGame{\Omega_{X,x}}{\Omega_{X,x}}\).
 \end{enumerate}
\end{proposition}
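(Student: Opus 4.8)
The plan is to mirror the proof of Proposition~\ref{lowerPropA} line by line, checking at each arrow that the construction carried out there sends a strategy of the indicated limited-information type to one of the corresponding type. The three constructions in Proposition~\ref{lowerPropA} never consult the opponent's full history except through the single datum that the downgraded strategy is already permitted to see, so no new idea is required: the winning verifications transfer verbatim, because the limited-information strategies produced are literally the same functions built in the perfect-information case.

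Concretely, for (a) $\Rightarrow$ (b) I would take a winning predetermined strategy $\sigma$ for $\plI$ in $\schStrongSelGame{\mc D_X}{\Omega_{X,x}}$ and form $\plII$'s strategy $\tau$ for $CL(X,x)$ exactly as before, namely $\tau(s\concat\<U\>)\in\sigma(s')\cap U$. Because $\sigma$ is predetermined, $\sigma(s')$ depends only on $|s'|=|s|$; hence $\tau(s\concat\<U\>)$ depends only on the round number $|s|$ and the most recent move $U$, so $\tau$ is Markov. That $\tau$ wins is the computation from Proposition~\ref{lowerPropA}: the points $\tau$ selects constitute a legal $\plII$-attack against $\sigma$, which $\sigma$ defeats. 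For (b) $\Rightarrow$ (c) the same function serves as a strategy in $\gruClusGame{X}{x}$, whose legal $\plI$-moves form a subset of those of $CL(X,x)$; restricting a Markov strategy leaves it Markov, so $\plII\markwin CL(X,x)$ yields $\plII\markwin\gruClusGame{X}{x}$.

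For (c) $\Rightarrow$ (d) I would start from a winning Markov strategy $\sigma$ for $\plII$ in $\gruClusGame{X}{x}$ and build $\plI$'s strategy through the blades $B_s=\{\sigma(s\concat\<U\>):U\in T_{X,x}\}$. Since $\sigma$ is Markov, $\sigma(s\concat\<U\>)$ depends only on $U$ and $|s|$, so collecting over all $U\in T_{X,x}$ makes $B_s$ depend only on $|s|$; writing $B_n$ for this common value, the strategy $\tau(r)=B_{|r|}$ is predetermined, and it wins by the same argument as in Proposition~\ref{lowerPropA}, establishing $\plI\prewin\schStrongSelGame{\Omega_{X,x}}{\Omega_{X,x}}$.

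The only step requiring genuine care — and the place where the argument is easy to mis-state — is confirming that the auxiliary ``history reconstruction'' appearing in Proposition~\ref{lowerPropA} (the sequences $s'$ and $r'$ recovered from the play so far) becomes inert once $\sigma$ is limited. In (a) $\Rightarrow$ (b) the predetermined $\sigma$ ignores $s'$ entirely, and in (c) $\Rightarrow$ (d) the Markov value $B_s$ is independent of which reconstructing sequence $r'$ one selects, so $\tau$ is genuinely a function of the round number alone. Once these collapses are recorded, the winning verifications are identical to the ones already given, and there is no obstacle beyond this bookkeeping.
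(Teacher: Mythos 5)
Your proposal is correct and takes essentially the same approach as the paper: the paper's proof defines exactly the collapsed limited-information strategies you describe, namely \(\tau(U,n)\in\sigma(n)\cap U\) for (a)\(\Rightarrow\)(b), restriction of the Markov strategy to the smaller move set for (b)\(\Rightarrow\)(c), and \(\tau(n)=\{\sigma(U,n):U\in T_{X,x}\}\) for (c)\(\Rightarrow\)(d), with the same winning verifications carried over from Proposition~\ref{lowerPropA}.
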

\begin{proof}
 Begin by letting \(\sigma\) be a winning predetermined strategy for \(\plI\) in \(\schStrongSelGame{\mc D_X}{\Omega_{X,x}}\). 
 Define the Markov strategy \(\tau\) for \(\plII\) in \(CL(X,x)\) by choosing \(\tau(U,n)\in\sigma(n)\cap U\). 
 Since \(\tau(U,n)\in\sigma(n)\) for all \(n<\omega\), it's clear that \(\{\tau(U,n):n<\omega\}\not\in\Omega_{X,x}\), making \(\tau\) a winning strategy, witnessing (a) implies (b).

 Let \(\sigma\) be a winning Markov strategy for \(\plII\) in \(CL(X,x)\). Then \(\sigma\) is also a winning Markov strategy for \(\plII\) in \(\gruClusGame{X}{x}\), so (b) implies (c).

 Given a winning Markov strategy \(\sigma\) for \(\plII\) in \(\gruClusGame{X}{x}\), let \(\tau(n)=\{\sigma(U,n):U\in T_{X,x}\}\). 
 Then \(\tau\) is a predetermined strategy for \(\plI\) in \(\schStrongSelGame{\Omega_{X,x}}{\Omega_{X,x}}\). 
 For any attack \(f\) against \(\tau\), \(f(n)=\sigma(g(n),n)\) for some \(g(n)\in T_{X,x}\). 
 But then \(g\) is an attack against \(\sigma\), and thus \(\{f(n):n<\omega\}=\{\sigma(g(n),n):n<\omega\}\not\in\Omega_{X,x}\), so we have (c) implies (d).
\end{proof}

We will see in the upcoming theorem that for \(C_p(X)\) with \(X\) \(T_{3.5}\), (a)-(d) in both of the previous propositions are actually equivalent.

\begin{theorem}
 The following are equivalent for all \(T_{3.5}\) spaces.
 \begin{enumerate}[a)]
  \item \(\plII\win \Omega FO(X)\).
  \item \(\plII\markwin \Omega FO(X)\).
  \item \(\plI\win G_1(\Omega_X,\Omega_X)\).
  \item \(X\) is not \(\Omega R\), that is, \(\plI\prewin G_1(\Omega_X,\Omega_X)\).
  \item \(\plI\win\schStrongSelGame{\Omega_{C_p(X),\mathbf 0}}{\Omega_{C_p(X),\mathbf 0}}\).
  \item \(C_p(X)\) is not \(sCFT\), that is, \(\plI\prewin \schStrongSelGame{\Omega_{C_p(X),\mathbf 0}}{\Omega_{C_p(X),\mathbf 0}}\).
  \item \(\plI\win\schStrongSelGame{\mc D_{C_p(X)}}{\Omega_{C_p(X),\mathbf 0}}\).
  \item \(C_p(X)\) is not \(sCDFT\), that is, \(\plI\prewin \schStrongSelGame{\mc D_{C_p(X)}}{\Omega_{C_p(X),\mathbf 0}}\).
  \item \(\plII\win\gruClusGame{C_p(X)}{\mathbf 0}\).
  \item \(\plII\markwin\gruClusGame{C_p(X)}{\mathbf 0}\).
  \item \(\plII\win CL(C_p(X),\mathbf 0)\).
  \item \(\plII\markwin CL(C_p(X),\mathbf 0)\).
  \item \(\plII\win CD(C_p(X))\).
  \item \(\plII\markwin CD(C_p(X))\).
 \end{enumerate}
\end{theorem}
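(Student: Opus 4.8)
The plan is to organize all fourteen conditions into a single cycle built on four inputs: the proposition immediately preceding this theorem, which already gives (a)$\Leftrightarrow$(b)$\Leftrightarrow$(c)$\Leftrightarrow$(d); Propositions~\ref{lowerPropA} and~\ref{lowerPropB}, applied to the space $C_p(X)$ at the point $\mathbf 0$; the standard $\omega$-cover/blade correspondence between $X$ and $C_p(X)$ (as used by Clontz); and two new constructions handling the discrete selection game. Applying Proposition~\ref{lowerPropA} at $\mathbf 0$ yields the chain (g)$\Rightarrow$(k)$\Rightarrow$(i)$\Rightarrow$(e), and Proposition~\ref{lowerPropB} yields (h)$\Rightarrow$(l)$\Rightarrow$(j)$\Rightarrow$(f); together with the trivial implications (a Markov strategy is a strategy, and a predetermined strategy is a strategy) this reduces the bulk of the theorem to closing the cycle at the top and to locating (m) and (n).

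First I would close the cycle among (e)--(l). The key upward step is (d)$\Rightarrow$(h). Given a predetermined strategy witnessing that $X$ is not $\Omega R$ --- that is, $\omega$-covers $\{\mathcal W_n : n<\omega\}$ such that for every selection $W_n\in\mathcal W_n$ the family $\{W_n\}$ is not an $\omega$-cover --- I let $\plI$ play at round $n$ the set $D_n=\{f\in C_p(X): \{x:|f(x)|<1\}\subseteq W \text{ for some } W\in\mathcal W_n\}$, which is dense by $T_{3.5}$. If $\plII$ selects $f_n\in D_n$ and $W_n\in\mathcal W_n$ witnesses membership, then since $\{W_n\}$ is not an $\omega$-cover there is a finite $F^\ast\subseteq X$ with $F^\ast\not\subseteq W_n$ for all $n$; hence each $f_n$ exceeds $1$ in absolute value somewhere on $F^\ast$, so no $f_n$ lies in $[\mathbf 0,F^\ast,1]$ and $\mathbf 0\notin\overline{\{f_n\}}$. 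Thus this predetermined play defeats $\plII$, giving (h). Combined with (h)$\Rightarrow$(g) and the two chains, (d) implies all of (e)--(l). For the single return trip I would invoke the reverse direction of the same correspondence (Clontz): a winning strategy for $\plI$ in $\schStrongSelGame{\Omega_{C_p(X),\mathbf 0}}{\Omega_{C_p(X),\mathbf 0}}$ translates, via $f\mapsto\{x:|f(x)|<1\}$, into a winning strategy for $\plI$ in $G_1(\Omega_X,\Omega_X)$, i.e. (e)$\Rightarrow$(c). Since every one of (e)--(l) implies (e), this shows (d)--(l) are all equivalent.

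It remains to fold in the discrete selection game. The downward implications are easy: a winning (Markov) strategy for $\plII$ in $CD(C_p(X))$ is in particular a legal (Markov) strategy in $\gruClusGame{C_p(X)}{\mathbf 0}$, because the latter game only restricts $\plI$ to neighborhoods of $\mathbf 0$; and a closed discrete set has no cluster point whatsoever, so in particular $\mathbf 0$ is not a cluster point. This gives (m)$\Rightarrow$(i) and (n)$\Rightarrow$(j), so by the above both (m) and (n) imply the common equivalent. The crux is the upward implication, which I would obtain directly as (d)$\Rightarrow$(n). Using the same witness $\{\mathcal W_n\}$, I define a Markov strategy for $\plII$ in $CD(C_p(X))$: when $\plI$ plays $U_n=[f_n,S_n,\epsilon_n]$, choose $W_n\in\mathcal W_n$ with $S_n\subseteq W_n$ (possible as $\mathcal W_n$ is an $\omega$-cover) and let $\plII$ play $x_n\in U_n$ that agrees with $f_n$ on $S_n$ but satisfies $|x_n|\geq n$ on $X\setminus W_n$ (via $T_{3.5}$). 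If some $g$ were a cluster point of $\{x_n\}$, then for any finite $F$ the functions $x_n$ would be bounded near $g$ on $F$ for infinitely many $n$, forcing $F\subseteq W_n$ for those $n$ once $n$ exceeds that bound; hence $\{W_n\}$ would be an $\omega$-cover, contradicting the choice of $\{\mathcal W_n\}$. So $\{x_n\}$ is closed discrete, the strategy is Markov, and (n) holds. With (n)$\Rightarrow$(m) trivial, all fourteen conditions are equivalent.

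I expect the main obstacle to be exactly this last construction, the implication (d)$\Rightarrow$(n): closed discreteness is substantially stronger than merely keeping $\mathbf 0$ out of a single closure, so the argument must exploit both the failure of $\Omega$-Rothberger on $X$ and the pointwise-convergence structure of $C_p(X)$ through a support/escape gadget. This is also the implication that answers Tkachuk's question. The remaining care is routine: verifying density of $D_n$ and the interpolations via complete regularity, and disposing of the degenerate case (if $X$ is finite it is $\Omega R$, so under the standing hypotheses $X$ is infinite and the chosen $\omega$-covers may be taken to consist of proper subsets).
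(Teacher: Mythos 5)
Your overall architecture matches the paper's: (a)--(d) come from the preceding proposition, the two chains (g)$\Rightarrow$(k)$\Rightarrow$(i)$\Rightarrow$(e) and (h)$\Rightarrow$(l)$\Rightarrow$(j)$\Rightarrow$(f) come from Propositions~\ref{lowerPropA} and~\ref{lowerPropB} applied to $C_p(X)$ at $\mathbf 0$, and the discrete selection game is folded in by a direct escape-function construction. Your (d)$\Rightarrow$(n) is correct and is essentially the paper's proof of (b)$\Rightarrow$(n) (its adaptation of Tkachuk's Proposition 3.9) composed with the already-known (d)$\Rightarrow$(b); your (d)$\Rightarrow$(h) via the dense sets $D_n$ is also correct, and is a nice self-contained substitute for one direction of the Sakai citation that the paper uses instead.

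The genuine gap is the single return implication (e)$\Rightarrow$(c), on which your whole cycle closes. The claimed translation --- simulate $\plI$'s winning strategy in $\schStrongSelGame{\Omega_{C_p(X),\mathbf 0}}{\Omega_{C_p(X),\mathbf 0}}$ and replace each blade $A_n$ by the $\omega$-cover $\{\{x:|f(x)|<1\}:f\in A_n\}$ --- does not work. If $\plII$ defeats the translated strategy, picking $U_n=\{x:|f_n(x)|<1\}$ with $\{U_n:n<\omega\}$ an $\omega$-cover, you must conclude that $\mathbf 0\in\overline{\{f_n:n<\omega\}}$ to contradict the original strategy; but the $\omega$-cover property only provides, for each finite $F$, some $n$ with $|f_n|<1$ on $F$, which says nothing about neighborhoods $[\mathbf 0,F,\epsilon]$ with $\epsilon<1$. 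The scale-$1/n$ trick that repairs this for selection principles (Sakai's argument) does not transfer to perfect-information strategies: the strategy's later blades depend on which single $f_n$ is fed back into it, so you cannot probe one blade at infinitely many scales within a single linear play. This difficulty is precisely what the paper outsources to two nontrivial citations: Sakai's theorem for (d)$\Leftrightarrow$(f)$\Leftrightarrow$(h) (a statement about predetermined strategies, i.e.\ selection principles), and 4.37 of \cite{CombOpenCovers} for the Pawlikowski-type equivalence (e)$\Leftrightarrow$(f) between full and predetermined strategies for $\plI$. Your appeal to ``Clontz'' does not cover the step either: the Clontz results invoked in this paper concern $\plII$'s strategies (the equivalences (b)$\Leftrightarrow$(j)$\Leftrightarrow$(k) and their Markov analogues in the earlier theorems), not $\plI$'s. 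As written, then, nothing in (e)--(n) is shown to imply anything in (a)--(d): the fourteen conditions split into two blocks with implications running only one way between them. To close the cycle you should either cite the two results above, as the paper does, or supply an actual proof of (f)$\Rightarrow$(d) or (e)$\Rightarrow$(f), both of which require more than the one-line correspondence you describe.
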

\begin{proof}
 (a)-(d) were shown in Proposition 19.
 The equivalence of (d), (f), and (h) was shown by Sakai\cite{Sakai}.
 The equivalence of (f) and (e) is given in 4.37 of \cite{CombOpenCovers}.

 Of course (h) implies (g). And since \(\mc D_{C_p(X)}\subseteq\Omega_{C_p(X),\mathbf 0}\), any winning strategy for \(\plI\) in \(\schStrongSelGame{\mc D_{C_p(X)}}{\Omega_{C_p(X),\mathbf 0}}\) is a winning strategy for \(\plI\) in \(\schStrongSelGame{\Omega_{C_p(X),\mathbf 0}}{\Omega_{C_p(X),\mathbf 0}}\), so (g) implies (e). 
 We have so far shown that (a) - (h) are equivalent.

 Proposition 20 established that (g) \(\Rightarrow\) (k) \(\Rightarrow\) (i) \(\Rightarrow\) (e).
 We just proved, however, that (g) and (e) are equivalent.
 So (e), (g), (i), and (k) are equivalent.
 Proposition 21 established that (h) \(\Rightarrow\) (l) \(\Rightarrow\) (j) \(\Rightarrow\) (f).
 Again, we just saw that (f) and (h) are equivalent.
 So (f), (h), (j), and (l) are equivalent.
 Thus (a) - (l) are equivalent.

 Assuming (b), we adapt Proposition 3.9 of \cite{Tkachuk3} as follows.
 Let \(\sigma\) be a winning Markov strategy for \(\plII\) in \(\Omega FO(X)\). 
 Then for \(U=[\mathbf x(U),supp(U),\epsilon(U)]\in T_{C_p(X)}\), let \(\tau(U,n)\in C_p(X)\) satisfy  \(\tau(U,n)(x)=\mathbf x(U)(x)\) for \(x\in F\) and \(\tau(U,n)(x)=n\) for \(x\in X\setminus\sigma(U,n)\). 
 Then \(\tau\) is a Markov strategy for \(\plII\), and when it is attacked by \(f\), we note that \(\{\sigma(supp(f(n)),n):n<\omega\}\) is not an \(\omega\)-cover. 
 So choose \(G\in[X]^{<\omega}\) such that \(G\not\subseteq\sigma(supp(f(n)),n)\) for all \(n<\omega\). 
 Then for \(\mathbf y\in C_p(X)\), choose \(m\) such that \(\mathbf y(x)<m\) for all \(x\in G\). 
 Note then that for \(n\geq m\), there exists \(x\in G\setminus\sigma(f(n),n)\) such that \(\tau(f(n),n)(x)=n\geq m\). 
 Then \(\{\mathbf z\in C_p(X):\mathbf z(x)<m\text{ for all }x\in G\}\) is an open neighborhood of \(\mathbf y\) that misses \(\tau(f(n),n)\) for all \(n\geq m\), so it follows that \(\{\tau(f(n),n):n<\omega\}\) is closed and discrete in \(C_p(X)\). 
 Therefore \(\tau\) is a winning Markov strategy, verifying (b) implies (n).

 It's clear that (n) implies (m), so finally note that a winning strategy for \(\plII\) in \(CD(C_p(X))\) is also a winning strategy for \(\plII\) in \(CL(C_p(X),\mathbf 0)\), so (m) implies (k).
 This completes the equivalence.
\end{proof}

The equivalence of (a) and (m) answers Question 4.6 of Tkachuk in \cite{Tkachuk3}.

\section{Open Problems}

\begin{question}
  In \cite{Tkachuk2}, Tkachuk found sufficient conditions for \(C_p(X,\mathbb{I})\) 
  to satisfy the discrete selection princple. 
  What happens when we play the discrete selection game on \(C_p(X,\mathbb I)\)?
\end{question}

\begin{question}
 Is there a point-picking game on \(C_p(X)\) which characterizes when \(X\) is not \(R\)?
\end{question}

\begin{question}
 There is a model of \(ZFC\) where \(R\) and \(\Omega R\) are distinct properties.
 Is it consistent that they are the same?
 That is, is there a universe of ZFC in which every \(R\) space is also \(\Omega R\)?
\end{question}

\begin{question}
 All the games played in this paper had length \(\omega\).
 Do these equivalences continue to hold for longer games?
\end{question}

\begin{question}
 The implications in Propositions \ref{lowerPropA} and \ref{lowerPropB} reverse
 when \(X=C_p(Y)\) for some \(T_{3.5}\) space \(Y\). When in general can these
 implications reverse?
\end{question}


\bibliographystyle{elsarticle-num}
\bibliography{bibliography}

\end{document}